\newtheorem{theorem}{Theorem}
\newtheorem{lemma}{Lemma}
\newtheorem{assumption}{Assumption}
\newtheorem{remark}{Remark}
\newcommand{\R}{\mathbb{R}}
\newcommand{\cC}{\mathcal{C}}
\newcommand{\cE}{\mathcal{E}}
\newcommand{\cK}{\mathcal{K}}
\newcommand{\cG}{\mathcal{G}}
\newcommand{\cI}{\mathcal{I}}
\newcommand{\cJ}{\mathcal{J}}
\newcommand{\cL}{\mathcal{L}}
\newcommand{\cN}{\mathcal{N}}
\newcommand{\cS}{{\mathcal{S}}}
\newcommand{\cO}{{\mathcal{O}}}
\newcommand{\cX}{\mathcal{X}}
\newcommand{\Reg}{\mathrm{Reg}}
\newcommand{\Vio}{\mathrm{Vio}}
\newcommand{\iprod}[2]{\langle #1, #2 \rangle}
\DeclareMathOperator*{\argmin}{arg\,min}
\newcommand{\be}{\begin{equation}}
\newcommand{\ee}{\end{equation}}
\begin{document}
%
\title{Augmented Lagrangian Methods for Time-varying Constrained Online Convex Optimization}
%
%
%

\author{Haoyang Liu,  Xiantao Xiao, and Liwei Zhang
\thanks{This work was supported in part by  the National Natural Science Foundation of China (No. 11731013, No. 11971089  and No. 11871135).}
\thanks{The authors are with
the School of Mathematical Sciences, Dalian University of Technology, 116023, Dalian,
China (e-mail: hyliu@mail.dlut.edu.cn; xtxiao@dlut.edu.cn; lwzhang@dlut.edu.cn).}}

\maketitle

\begin{abstract}
In this paper, we consider online convex optimization (OCO) with time-varying loss and constraint functions. Specifically, the decision maker chooses sequential decisions  based only on past information, meantime the loss and constraint functions are revealed over time. We first develop a class of model-based augmented Lagrangian methods (MALM) for time-varying functional constrained OCO (without feedback delay). Under standard assumptions, we establish sublinear regret and sublinear constraint violation of MALM. Furthermore, we extend MALM to deal with time-varying functional constrained OCO with delayed feedback, in which the feedback information of  loss and constraint functions is revealed to decision maker with delays. Without additional assumptions, we also establish  sublinear regret and sublinear constraint violation for the delayed version of MALM. Finally, numerical results for several examples of constrained OCO including online network resource allocation, online logistic regression and online quadratically constrained quadratical program are presented to demonstrate the efficiency of the proposed algorithms.
\end{abstract}

\begin{IEEEkeywords}
Online convex optimization, time-varying constraints, feedback delays, augmented Lagrangian, regret, constraint violation.
\end{IEEEkeywords}

%
\IEEEpeerreviewmaketitle

\section{Introduction}
\label{sec:introduction}
\IEEEPARstart{I}{n} recent years, online convex optimization (OCO) has attracted more and more attention, due to its broad applicability in a wide variety of fields including
machine learning, signal processing and control \cite{Shai2011,Hazan2015,HSLZ2021}.
In online optimization, the player or decision maker must make decisions sequentially based only on past environment feedback. In this setting, it is impossible to design an algorithm to obtain the exact optimal solution due to lack of needed information. Therefore, the standard measurement of the performance of OCO algorithms is \textit{regret} for unconstrained or time-invariant constrained OCO, together with \textit{constraint violation} for time-varying constrained OCO.

For OCO without functional constraint or with time-invariant constraints, a number of computationally efficient algorithms have been studied. We only name a few of them in the following. In the pioneering work \cite{Zi2003}, Zinkevich developed an online projected gradient descent algorithm for minimizing a time-varying loss function constrained to a simple stationary set, and proved that this algorithm possesses a sublinear regret of $\cO(\sqrt{T})$, where $T$ is the length of the total time horizon. Later, a simple example was constructed in \cite{HAK2007} to show that $\cO(\sqrt{T})$ regret is tight for an arbitrary sequence of differentiable convex loss functions and better regret is possible under the assumption that each loss function is strongly convex. In \cite{LTS2021}, a second-order approach named online Newton's method was proposed and shown to outperform online gradient descent algorithm.    OCO with long term constraints, in which the constraints are time-invariant, was first studied in \cite{MRYang2012} and later solved in \cite{YN2020} by a low complexity algorithm with $\cO(\sqrt{T})$ regret and $\cO(1)$ constraint violation.

In this paper, we focus on constrained OCO with time-varying constraints. The authors in \cite{MTY2009} considered a problem that minimizes a time-varying convex objective function subject to the time average of a single time-varying constraint function being less than or equal to 0. A continuous time constrained OCO with long term time-varying constraints was studied in \cite{PR2017}. For constrained OCO with time-varying constraints, a modified online saddle-point (MOSP) method  was proposed in \cite{CLG2017} and shown to possess $\cO(T^{\frac{2}{3}})$ bounds of regret and constraint violation. An online virtual-queue based algorithm (simply denoted by NY here) was developed in \cite{NYu2017} and proved that the improved $\cO(\sqrt{T})$ bounds of regret and constraint violation are attained. An online saddle-point algorithm (simply denoted by CL here) associated with a modified Lagrangian was studied in \cite{CLiu2019} for constrained OCO with time-varying constraints and bandit feedback. After equivalent reformulations,  MOSP, NY and CL can all be viewed as online Lagrangian-based primal-dual methods, see Section \ref{subsec:nra} for a  detailed discussion.
An online alternating direction method of multipliers was proposed in \cite{ZDH2021} for OCO with linear time-varying constraints.
The  prediction-correction interior-point method was investigated in \cite{FPPR2018} for time-varying convex optimization, which is different from the setting of OCO since it needs the information of the current objective and constraint functions. Another related research line is distributed OCO with time-varying constraints \cite{CB2021a,Cb2021b,YLYXCJ2021}, which is beyond the scope of this paper.

Most of the aforementioned algorithms for constrained OCO with time-varying constraints can be cast into the family of online first-order algorithms. It is well known in the community of optimization and beyond  that although the iteration in first-order algorithms is computationally cheap and some of them perform well for specific structured problems, there are plenty of practical experiences and theoretical evidences of their convergence difficulties and instability with respect to the choice of parameters and stepsizes. For convex programming, augmented Lagrangian method is known to converge asymptotically superlinearly \cite{Rockafellar76B}. Moreover,   the success of augmented Lagrangian methods for various functional constrained optimization problems was witnessed in past decades. For instance, a Newton-CG augmented Lagrangian method for semidefinite programming (SDP) was considered in \cite{ZST2010} and shown to be very efficient even for large-scale SDP problems.
 Several augmented Lagrangian-based algorithms for distributed optimization were proposed in \cite{CDZ2015,CZ2016,MK2021}. In \cite{ZZWX2022}, a stochastic linearized augmented Lagrangian method with proximal term was developed for solving stochastic optimization with expectation constraints and demonstrated to outperform several existing stochastic first-order algorithms. Inspired by these observations, the aim of this paper is to study efficient online augmented Lagrangian-type methods for constrained OCO with time-varying constraints. To the best of our knowledge, this is still limited in the literature.

 The main contributions of this paper are summarized as follows.
 \begin{itemize}
 \item We propose a class of model-based augmented Lagrangian methods (MALM) for time-varying constrained OCO (without feedback delay). One of the notorious disadvantages for traditional augmented Lagrangian methods is that the subproblem is usually computationally intractable. However, in MALM, we show that this issue can be resolved by choose the model properly. Under standard assumptions, we establish $\cO(\sqrt{T})$ sublinear bounds of both regret and constraint violation, which are better than those of MOSP and CL. Nevertheless, in the convergence analysis, unlike MOSP and CL,  the corresponding stepsizes of MALM are chosen to $\alpha=\sqrt{T}$, $\sigma=1/\sqrt{T}$ which are independent of problem parameters. Moreover, numerical results of two examples named online network resource allocation and online logistic regression are presented to verify the efficiency of MALM, particularly compared with MOSP, NY and CL.

 \item In certain applications of OCO, the feedback information of the loss and/or constraint functions is often revealed to decision makers with delay $\tau\geq 0$ due to environment reaction time or computation burden \cite{ZLS2009,JGS2013,PSLW2022}. Time-varying constrained OCO with feedback delays was initially studied in \cite{CZP2021} very recently. The algorithm in \cite{CZP2021}, which can be regarded as a ``delayed" variant of CL, was shown to possess sublinear regret bound $\cO(\sqrt{\tau T})$ and sublinear constraint violation bound $\cO(T^{\frac{3}{4}}\tau^{\frac{1}{4}})$. Motivated by \cite{CZP2021}, we also extend MALM to solve time-varying constrained OCO with delayed feedback. We establish regret bound $\cO(\sqrt{\tau T})$ and a better constraint violation bound $\cO(T^{\frac{1}{2}}\tau^{\frac{3}{2}})$ (since $\tau$  is usually much smaller than $T$). Additionally, unlike \cite{CZP2021}, the delay $\tau$ is not required to be sublinear with respect to $T$. Finally, numerical experiments of online quadratically constrained quadratical program with feedback delays are conducted to demonstrate the advantage of MALM.
\end{itemize}

The rest of this paper is organized as follows. In Section \ref{sec:nodelay},  the formulation of time-varying constrained OCO (without feedback delay) is introduced and the performance analysis of MALM is presented. In Section \ref{sec:delay},  the  constrained OCO with feedback delays is formally described and the convergence of the corresponding MALM is established. Numerical results on three numerical problems are presented in Section \ref{sec:numerical}. Finally, this paper is concluded in Section \ref{sec:conclude}.
\section{Augmented Lagrangian Methods for  Constrained OCO without Feedback Delay}\label{sec:nodelay}
\subsection{Problem Statement}
The constrained OCO problem can be viewed as a multi-round learning process. At each round (or time slot) $t$, the online decision maker or player makes a decision $x_t\in\cC$, where $\cC$ is a set of admissible actions. After that, the player receives  a loss (objective) function $f_t:\cC\mapsto\R$ and a constraint function $g_t:\cC\mapsto \R^p$  which can be used to make the new decision $x_{t+1}$. Given a fixed time horizon $T$, the best fixed decision $x^*$ is defined by the optimal solution to a constrained optimization problem:
\be\label{eq:xstar}
x^*\in\argmin\limits_{x\in\cC}\sum_{t=0}^{T-1}f_t(x),\ \mbox{s.t.}\ g_t(x)\leq 0, t\in [T],
\ee
where $[T]=\{0,1,2,\ldots,T-1\}$. Since the functions $\{f_t,g_t\}_{t\in[T]}$ are unknown to the player beforehand, it is not possible to derive the best decision $x^*$. Instead, the player proposes  a learning algorithm to generate a decision sequence $\{x_t\}_{t\in[T]}$ and  evaluate its performance  by the regret of the objective reduction and the constraint violation defined by
\be\label{eq:regret}
\begin{array}{ll}
\displaystyle\Reg(T):=\sum_{t=0}^{T-1}[f_t(x_t)-f_t(x^*)],\\[10pt]
\displaystyle\Vio^{(i)}(T):=\sum_{t=0}^{T-1}g_t^{(i)}(x_t),\quad i=1,\ldots,p,
\end{array}
\ee
respectively.
Here, $g_t^{(i)}$ is the component of $g_t$ and let us denote that $g_t(x):=[g^{(1)}_t(x),\dots,g_t^{(p)}(x)]^T$. \

In the sequel, let us assume that $\cC\in\cX$ is a closed convex set and $\cX$ is a finite-dimensional real Hilbert space equipped with an inner product $\iprod{\cdot}{\cdot}$ and its induced norm $\|\cdot\|$. For a vector $\lambda\in\R^p$, the notation $\|\lambda\|$  means the Euclidean norm. We make the following  assumptions, which are standard in the literature, see \cite{CLG2017,NYu2017,CLiu2019}.
\begin{assumption}\label{assu:bounded}
The set $\cC$ is bounded. In particular, there exists a constant $D>0$ such that $\|x-y\|\leq D$ for any $x,y\in\cC$.
\end{assumption}
\begin{assumption}\label{assu:subgradient}
The function $f_t$ and $g_t$  are continuous convex and their subgradients are bounded. In particular,
there exist  positive constants $\kappa_f$ and $\kappa_g$  such that $\|u_t\|\leq \kappa_f$ and $\|v^{(i)}_t\|\leq \kappa_g$  for any $x\in\cC$, $u_t\in\partial f_t(x)$ and  $v^{(i)}_t\in\partial g^{(i)}_t(x)$.
\end{assumption}
\begin{assumption}\label{assu:slater}
The Slater's condition holds, i.e., there exist a positive constant $\varepsilon_0$ and a decision $\widehat{x}\in\cC$ such that  $g_t^{(i)}(\widehat{x} )\leq -\varepsilon_0$, $i=1,\ldots,p$.
\end{assumption}
\subsection{Algorithm Development}
In this subsection, we propose a class of model-based augmented Lagrangian methods (MALM) for solving OCO with time-varying loss and constraint functions.

Let $F_{t,x}$ and $G^{(i)}_{t,x}$ be conservative approximations to $f_t$ and $g^{(i)}_t$ at a reference point  $x\in\cC$, respectively, i.e., $F_{t,x}$ and $G^{(i)}_{t,x}$ are convex functions  and satisfy the following assumption.
\begin{assumption}\label{assu:model}
 \begin{itemize}
 \item[(i)]	For all $y\in\cC$,
 $$
 F_{t,x}(y)\leq f_t(y)\ \mbox{and}\ F_{t,x}(x)=f_t(x).
 $$
  \item[(ii)]	For any $i=1,\ldots,p$ and all $y\in\cC$,
 $$
 G^{(i)}_{t,x}(y)\leq g^{(i)}_t(y)\ \mbox{and}\ G^{(i)}_{t,x}(x)=g^{(i)}_t(x).
 $$
 \item[(iii)] The mapping $G_{t,x}(\cdot):=[G^{(1)}_{t,x}(\cdot),\ldots,G^{(p)}_{t,x}(\cdot)]^T$ is bounded on $\cC$, that is, there exists a constant $\nu_g>0$ such that
 $$
 \|G_{t,x}(y)\|\leq\nu_g,\quad\forall y\in \cC.
 $$
 \end{itemize}
\end{assumption}
In \cite{AD2019I}, the functions $F_{t,x}$ and $G^{(i)}_{t,x}$  were named as the \textit{model} of $f_t$ and $g^{(i)}_t$ at $x\in\cC$, respectively. It was also shown that a family of model-based methods for solving stochastic optimization problems enjoy stronger convergence and robustness guarantees than classical approaches, and add almost no extra computational burden. In the following subsection, we shall introduce some concrete examples of model to show the advantage.

At time slot $t$, consider the following approximation optimization problem:
\be\label{eq:approx}
\begin{array}{ll}
\min\limits_{x\in\cC}\ &F_{t,x_t}(x)\\[8pt]
\mbox{s.t.}\ & G_{t,x_t}(x)\leq 0,
\end{array}
\ee
and the corresponding augmented Lagrangian function:
\[
\cL_{t,\sigma}(x,\lambda):=F_{t,x_t}(x)+\frac{1}{2\sigma}\left[\|[\lambda+\sigma G_{t,x_t}(x)]_+\|^2-\|\lambda\|^2\right].
\]
Here, $\lambda\in\R^p$ is the multiplier, $\sigma>0$ is viewed as the penalty parameter, $[a]_+=\max\{a,0\}$ for a scalar $a\in\R$ and the operator $[\cdot]_+$ is conducted componentwise for a vector.

The proposed model-based augmented Lagrangian methods are conducted as follows.
After the decision $x_t$ and the multiplier $\lambda_t$ are determined,  the player receives the functions $f_t$ and $g_t$. Then, a new action $x_{t+1}$ is taken by solving the following minimization problem:
\be\label{eq:MALM-1}
x_{t+1}=\argmin\limits_{x\in\cC}\left\{\cL_{t,\sigma}(x,\lambda_t)+\frac{\alpha}{2}\|x-x_t\|^2\right\},
\ee
where $\alpha>0$ is the parameter of the proximal term $\frac{1}{2}\|x-x_t\|^2$. Furthermore, the player updates the multiplier $\lambda_{t+1}$ by
\be\label{eq:MALM-2}
\lambda_{t+1}=[\lambda_t+\sigma G_{t,x_t}(x_{t+1})]_+.
\ee
The detail of  the model-based augmented Lagrangian methods for constrained OCO is summarized in Algorithm \ref{alg:MALM0}.
\begin{algorithm2e}[htp]
\caption{MALM for constrained OCO}
\label{alg:MALM0}
\lnlset{alg:MALM1}{1}{Initialization: ~Choose an initial action $x_0 \in \cC$ and select parameters $\sigma>0,\alpha>0$.  Set $\lambda_0=0\in\R^p$.}
\\ \vspace{.5ex}
\For{$t=0,1,2,\ldots,T-1$}{
\lnlset{alg:MALM2}{2}{Submit the decision $x_t$.
} \\ \vspace{.5ex}
\lnlset{alg:MALM3}{3}{Receive the functions $f_t$ and $g_t$.
} \\ \vspace{.5ex}
\lnlset{alg:MALM0-4}{4}{Compute the new decision $x_{t+1}$ according to (\ref{eq:MALM-1}).
} \\ \vspace{.5ex}
\lnlset{alg:MALM0-5}{5}{Update the multiplier $\lambda_{t+1}$ according to (\ref{eq:MALM-2}).
}
\\}
\end{algorithm2e}

We present the sublinear regret and constraint violation of  Algorithm \ref{alg:MALM0} for constrained OCO in the following theorem. The results can be recovered from Theorem \ref{th:regret} and Theorem \ref{th:constraint} by taking the delay $\tau=0$. Although Theorem \ref{th:MALM} can be proved separately by a relatively simpler analysis (no need to deal with the delay), we omit the proof here to save space.
\begin{theorem}\label{th:MALM}
Suppose Assumptions \ref{assu:bounded}-\ref{assu:model} hold. Set $\alpha=\sqrt{T}$ and $\sigma=1/\sqrt{T}$. Then, the regret of Algorithm \ref{alg:MALM0} satisfies
\[
\sum_{t=0}^{T-1}[f_t(x_t)-f_t(x^*)]\leq\frac{\kappa_f^2+\nu_g^2+D^2}{2}\sqrt{T},
\]
where $x^*$ is defined in (\ref{eq:xstar}).  Assume further that $T$ is large enough such that $T>p\kappa_g^2$.  Then, for $i=1,\ldots,p$, the constraint violation of Algorithm \ref{alg:MALM0} satisfies
\[
\begin{array}{ll}
\displaystyle\sum_{t=0}^{T-1}g^{(i)}_t(x_t)
\leq[\kappa_2+2\sqrt{p}\kappa_g^2(\nu_g+\kappa_2)]\\[8pt]
\quad\quad\quad\quad\quad+[(2\sqrt{p}\kappa_g^2+1)(\kappa_0+\kappa_1+2\kappa_3)+2\kappa_g\kappa_f]\sqrt{T},\ \end{array}
\]
where $\kappa_0,\kappa_1,\kappa_2,\kappa_3$ are constants given in (\ref{eq:kappa}).
\end{theorem}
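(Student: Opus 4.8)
The plan is to regard the subproblem (\ref{eq:MALM-1})--(\ref{eq:MALM-2}) as producing, at each round $t$, the exact saddle point $(x_{t+1},\lambda_{t+1})$ of the regularized Lagrangian $\Phi_t(x,\mu)=F_{t,x_t}(x)+\langle\mu,G_{t,x_t}(x)\rangle-\tfrac{1}{2\sigma}\|\mu-\lambda_t\|^2+\tfrac{\alpha}{2}\|x-x_t\|^2$ over $x\in\cC$, $\mu\ge 0$: this holds because $\cL_{t,\sigma}(x,\lambda_t)=F_{t,x_t}(x)+\max_{\mu\ge 0}\{\langle\mu,G_{t,x_t}(x)\rangle-\tfrac{1}{2\sigma}\|\mu-\lambda_t\|^2\}$ with maximizer $[\lambda_t+\sigma G_{t,x_t}(x)]_+$, and $\Phi_t(\cdot,\lambda_{t+1})$ is $\alpha$-strongly convex. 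Before exploiting this I would record, from Assumptions \ref{assu:subgradient} and \ref{assu:model}, that any $\tilde u_t\in\partial F_{t,x_t}(x_t)$ also lies in $\partial f_t(x_t)$ (since $F_{t,x_t}\le f_t$ with equality at $x_t$), hence $\|\tilde u_t\|\le\kappa_f$ and, using Assumption \ref{assu:bounded}, $|F_{t,x_t}(y)-f_t(x_t)|\le\kappa_f D$ and $|G^{(i)}_{t,x_t}(y)-g^{(i)}_t(x_t)|\le\kappa_g D$ for every $y\in\cC$; also $G^{(i)}_{t,x_t}(x^*)\le g^{(i)}_t(x^*)\le 0$ and, by Assumption \ref{assu:slater}, $G^{(i)}_{t,x_t}(\widehat x)\le g^{(i)}_t(\widehat x)\le-\varepsilon_0$. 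Combining the two saddle inequalities of $\Phi_t$ with the strong convexity and the nonexpansiveness of the projection in (\ref{eq:MALM-2}) yields, for all $x\in\cC$ and $\mu\ge 0$, a master per-step inequality relating $F_{t,x_t}(x_{t+1})-F_{t,x_t}(x)+\langle\mu,G_{t,x_t}(x_{t+1})\rangle-\langle\lambda_{t+1},G_{t,x_t}(x)\rangle$ to $\tfrac{1}{2\sigma}(\|\mu-\lambda_t\|^2-\|\mu-\lambda_{t+1}\|^2)+\tfrac{\alpha}{2}(\|x-x_t\|^2-\|x-x_{t+1}\|^2)$; specializing $(x,\mu)$ then drives both proofs.

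For the regret, take $x=x^*$ and $\mu=\lambda_{t+1}$: the term $-\langle\lambda_{t+1},G_{t,x_t}(x^*)\rangle\ge 0$ is discarded, $\langle\lambda_{t+1},G_{t,x_t}(x_{t+1})\rangle\ge\tfrac{1}{2\sigma}(\|\lambda_{t+1}\|^2-\|\lambda_t\|^2)$, and $f_t(x_t)-f_t(x^*)\le F_{t,x_t}(x_t)-F_{t,x_t}(x^*)\le\kappa_f\|x_t-x_{t+1}\|+[F_{t,x_t}(x_{t+1})-F_{t,x_t}(x^*)]$; absorbing $\kappa_f\|x_t-x_{t+1}\|$ into the $-\tfrac{\alpha}{2}\|x_{t+1}-x_t\|^2$ produced by strong convexity and controlling the $\|\lambda_{t+1}-\lambda_t\|^2\le\sigma^2\nu_g^2$ residue gives
\[ f_t(x_t)-f_t(x^*)+\tfrac{1}{2\sigma}\bigl(\|\lambda_{t+1}\|^2-\|\lambda_t\|^2\bigr)\le\tfrac{\kappa_f^2}{2\alpha}+\tfrac{\sigma\nu_g^2}{2}+\tfrac{\alpha}{2}\bigl(\|x^*-x_t\|^2-\|x^*-x_{t+1}\|^2\bigr). \]
Summing over $t\in[T]$, using $\lambda_0=0$, $\|x^*-x_0\|\le D$ and dropping $\tfrac{1}{2\sigma}\|\lambda_T\|^2\ge 0$ gives $\Reg(T)\le\tfrac12\bigl(\kappa_f^2T/\alpha+\nu_g^2\sigma T+\alpha D^2\bigr)$, which is $\tfrac12(\kappa_f^2+\nu_g^2+D^2)\sqrt T$ at $\alpha=\sqrt T$, $\sigma=1/\sqrt T$.

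For the constraint violation I would proceed in four steps. (i) Put $x=\widehat x$, $\mu=0$ in the master inequality; using $-\langle\lambda_{t+1},G_{t,x_t}(\widehat x)\rangle\ge\varepsilon_0\sum_{i=1}^p\lambda_{t+1}^{(i)}$ and $F_{t,x_t}(x_{t+1})-F_{t,x_t}(\widehat x)\ge-2\kappa_f D$ gives the drift inequality
\[ \varepsilon_0\sum_{i=1}^p\lambda_{t+1}^{(i)}+\tfrac{1}{2\sigma}\|\lambda_{t+1}\|^2\le\tfrac{1}{2\sigma}\|\lambda_t\|^2+2\kappa_f D+\tfrac{\alpha}{2}\bigl(\|\widehat x-x_t\|^2-\|\widehat x-x_{t+1}\|^2\bigr). \]
(ii) Upgrade this to a uniform bound $\|\lambda_t\|\le B$ with $B$ independent of $T$: since $\|\lambda_{t+1}-\lambda_t\|\le\sigma\|G_{t,x_t}(x_{t+1})\|\le\sigma\nu_g$, the multiplier moves by at most $\sigma\nu_g$ per round, and summing the drift inequality over a window $[t_0,t_1)$ makes the $\tfrac{\alpha}{2}\|\widehat x-\cdot\|^2$ term telescope to at most $\alpha D^2/2$; choosing the window length $W\asymp D\sqrt{\alpha/(\varepsilon_0\sigma\nu_g)}$ one obtains $\|\lambda_{t_1}\|<\|\lambda_{t_0}\|$ whenever $\|\lambda_{t_0}\|$ exceeds a threshold $\theta\asymp\kappa_f D/\varepsilon_0+D\sqrt{\alpha\sigma\nu_g/\varepsilon_0}$, and a bounded-drift argument in the spirit of \cite{NYu2017} then yields $\|\lambda_t\|\le\theta+\sigma\nu_g W=:B$ for all $t$ — a $T$-independent constant precisely because $\alpha\sigma=1$ under the prescribed stepsizes. (iii) Testing the optimality condition of (\ref{eq:MALM-1}) against $x=x_t$ and using convexity of the model gives $\alpha\|x_{t+1}-x_t\|^2\le\bigl(\kappa_f+\kappa_g\sum_{i=1}^p\lambda_{t+1}^{(i)}\bigr)\|x_{t+1}-x_t\|$, hence $\|x_{t+1}-x_t\|\le(\kappa_f+\sqrt{p}\,\kappa_g B)/\alpha$. (iv) Telescoping (\ref{eq:MALM-2}) gives $\sum_{t=0}^{T-1}G^{(i)}_{t,x_t}(x_{t+1})\le\lambda_T^{(i)}/\sigma\le B/\sigma$, and since $g^{(i)}_t(x_t)=G^{(i)}_{t,x_t}(x_t)\le G^{(i)}_{t,x_t}(x_{t+1})+\kappa_g\|x_t-x_{t+1}\|$, summing yields $\sum_{t=0}^{T-1}g^{(i)}_t(x_t)\le B/\sigma+\kappa_g T(\kappa_f+\sqrt{p}\,\kappa_g B)/\alpha$, which is $\cO(\sqrt T)$ for $\alpha=\sqrt T$, $\sigma=1/\sqrt T$; identifying $B$ and the auxiliary constants with $\kappa_0,\kappa_1,\kappa_2,\kappa_3$ in (\ref{eq:kappa}) reproduces the stated bound (the hypothesis $T>p\kappa_g^2$ is what makes the lower-order $\sigma\kappa_g^2$-type terms dominated).

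The one genuinely delicate step is (ii): a per-round use of the drift inequality only gives $\|\lambda_t\|=\cO(\alpha)=\cO(\sqrt T)$, which would make the violation linear in $T$; the point is that the $\tfrac{\alpha}{2}\|\widehat x-x_t\|^2$ penalty is a one-time cost that must be amortized over a window whose length is tuned so that both this amortized cost and the accumulated $\sigma\nu_g$-per-round fluctuation stay $\cO(1)$, which is exactly where the balanced choice $\alpha=\sqrt T$, $\sigma=1/\sqrt T$ (so $\alpha\sigma=1$) is used. Since Theorem \ref{th:MALM} is the $\tau=0$ specialization of Theorems \ref{th:regret}--\ref{th:constraint}, the same argument applies with the delay-dependent terms set to zero.
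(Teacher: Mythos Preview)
Your argument is correct and follows essentially the same route as the paper. The paper proves Theorem~\ref{th:MALM} by specializing Theorems~\ref{th:regret} and~\ref{th:constraint} to $\tau=0$: those in turn rest on (a) the per-step inequality~(\ref{eq:b4}), obtained by comparing $\cL_{t,\sigma}(x_{t+1},\lambda_t)$ to $\cL_{t,\sigma}(x,\lambda_t)$ via optimality of $x_{t+1}$; (b) a windowed drift argument (Lemma~\ref{lem:recur} fed into the appendix Lemma~\ref{lem:l7}) to get a $T$-independent bound on $\|\lambda_t\|$; (c) a step-size estimate~(\ref{eq:c6}) on $\|x_{t+1}-x_t\|$; and (d) telescoping~(\ref{eq:alg2}). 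Your steps (i)--(iv) are exactly these, only packaged through the saddle-point identity $\cL_{t,\sigma}(x,\lambda_t)+\tfrac{\alpha}{2}\|x-x_t\|^2=\max_{\mu\ge 0}\Phi_t(x,\mu)$, which yields your ``master inequality'' equivalent to~(\ref{eq:b3})--(\ref{eq:b4}). Your step~(iii), taking $x=x_t$ and $\mu=\lambda_{t+1}$, is in fact slightly cleaner than the paper's derivation of~(\ref{eq:c6}) (which is where the hypothesis $T>p\kappa_g^2$ enters, via $2\alpha-p\kappa_g^2\sigma>\alpha$).

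The one place to be more careful is step~(ii). The paper does not use an ad-hoc window argument ``in the spirit of \cite{NYu2017}'' but invokes the specific drift lemma reproduced in the appendix (Lemma~\ref{lem:l7}), whose conclusion carries the extra term $t_0\cdot\tfrac{4\delta_{\max}^2}{\zeta}\log(8\delta_{\max}^2/\zeta^2)$; this is precisely the origin of the logarithmic factor inside $\kappa_3$ in~(\ref{eq:kappa}). Your heuristic choice $W\asymp D\sqrt{\alpha/(\varepsilon_0\sigma\nu_g)}$ gives the correct $T$-independent order, but to ``reproduce the stated bound'' with the exact constants $\kappa_0,\kappa_1,\kappa_2,\kappa_3$ you would need to run the same drift lemma with the same window $s\in[\sqrt{T},2\sqrt{T}]$ as in~(\ref{eq:c7}). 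That aside, the plan is sound.
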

\begin{remark}\label{rem:1}
From Theorem \ref{th:MALM}, we obtain
\[
\frac{1}{T}\sum_{t=0}^{T-1}[f_t(x_t)-f_t(x^*)]\leq\cO\left(\frac{1}{\sqrt{T}}\right)
\]
and
\[
\frac{1}{T}\sum_{t=0}^{T-1}g^{(i)}_t(x_t)\leq\cO\left(\frac{1}{\sqrt{T}}\right),\ i=1,\ldots,p,
\]
which show that the time average regret $\frac{\Reg(T)}{T}\leq\cO(1/\sqrt{T})$ and the time average constraint violation $\frac{\Vio^{(i)}(T)}{T}\leq\cO\left(1/\sqrt{T}\right)$. This means that both of $\frac{\Reg(T)}{T}$ and $\frac{\Vio^{(i)}(T)}{T}$ converge to zero with sublinear rate when the time length $T$ goes to infinity. This is the reason why $\Reg(T)=\cO(T^{\beta})$ with $\beta\in(0,1)$ is used in OCO literature to measure the performance of algorithms.

In this paper, we use the metric $\Reg(T)$ and $\Vio^{(i)}(T)$ defined in (\ref{eq:regret}). Nevertheless, other definitions are possible and also used in the literature. For example, in \cite{LDPSM2019} the following regret was adopted:
\[
\widetilde{\Reg}(T):=\sum_{t=0}^{T-1}[f_t(x_t)-f_t(x^{\#})],
\]
where $x^{\#}$ is the best decision with respect to long-term budget constraints as
\[
x^{\#}\in\argmin\limits_{x\in\cC}\sum_{t=0}^{T-1}f_t(x),\ \mbox{s.t.}\ \sum_{t=0}^{T-1}g_t(x)\leq 0.
\]
It is easy to see that $\Reg(T)\leq \widetilde{\Reg}(T)$ and hence $\widetilde{\Reg}(T)$ is a better metric. However, in \cite{MTY2009} (and \cite{LDPSM2019}) it was shown that the sublinear bounds of $\widetilde{\Reg}(T)$ and $\Vio^{(i)}(T)$ are impossible to attain simultaneously.

In \cite{CLG2017}, a dynamic regret was considered as follows:
\[
\Reg_{\textit{d}}(T):=\sum_{t=0}^{T-1}[f_t(x_t)-f_t(x_t^{*})],
\]
where $\{x^*_t\}$ is a sequence of best dynamic decisions given by
\be\label{eq:dynamic-decision}
x^{*}_t\in\argmin\limits_{x\in\cC}f_t(x),\ \mbox{s.t.}\ g_t(x)\leq 0.
\ee
Again, we have $\Reg(T)\leq \Reg_{\textit{d}}(T)$. But, $\Reg_{\textit{d}}(T)$ is seldom used in the literature, even for unconstrained OCO \cite{Hazan2015}.

In \cite{CLG2017}, a dynamic constraint violation was also considered:
\[
\Vio_{\textit{d}}(T):=\left\|\left[\sum_{t=0}^{T-1}g_t(x_t)\right]_+\right\|,
\]
which is not stronger than $\Vio^{(i)}(T)$.  Since $\Vio^{(i)}(T)\leq\cO(\sqrt{T})$ must imply \[\left[\sum_{t=0}^{T-1}g^{(i)}_t(x_t)\right]_+\leq\cO(\sqrt{T}),\]
and hence
\[
\Vio_{\textit{d}}(T)=\sqrt{\sum_{i=1}^p\left[\sum_{t=0}^{T-1}g^{(i)}_t(x_t)\right]^2_+}\leq \cO(\sqrt{T}).
\]

\end{remark}
\begin{remark}\label{rem:2}
In \cite{CLG2017,NYu2017,CLiu2019}, the algorithms MOSP, NY and CL were proposed for solving OCO with time-varying loss and constraint functions, respectively. From Section \ref{subsec:nra}, it can be shown that these three algorithms are very similar especially for linear constrained OCO.
 All of these algorithms can be equivalently written as the primal-dual gradient methods associated with certain modified Lagrangian functions, and hence  cast into the family of online first-order methods. Meanwhile, MALM is an online version of proximal method of multipliers \cite{Rockafellar76B}, which is a type of online augmented Lagrangian method. It is known that, for traditional convex programming, these primal-dual gradient methods have at most linear rate of convergence. In contrast, the proximal method of multipliers has an asymptotic superlinear rate of convergence \cite{Rockafellar76B}.

From Theorem \ref{th:MALM}, we observe that Algorithm \ref{alg:MALM0} posseses $\cO(\sqrt{T})$ sublinear bounds of both regret and constraint violation, i.e.,
\[
\Reg(T)\leq\cO(\sqrt{T}),\quad \Vio^{(i)}(T)\leq \cO(\sqrt{T}),\ i=1,\ldots,p,
\]
which are better than MOSP with $\cO(T^{2/3})$  bounds of both regret and constraint violation, and CL with $\cO(\sqrt{T})$  regret bound and $\cO(T^{3/4})$   constraint violation bound.

Moreover, we observe from Theorem \ref{th:MALM} that the stepsizes are set as $\alpha=\sqrt{T}$ and $\sigma=1/\sqrt{T}$, which are free of parameters,  such as the diameter $D$ of set $\cC$, the subgradient bounds $\kappa_f$ and $\kappa_g$. In contrast, the stepsizes in MOSP and CL are dependent on problem parameters, which are often difficult to estimate and hence  may cause inconvenience in implementation.

Finally, as stated by the authors in \cite{CLG2017}, MOSP is only efficient for linear constraints and computational intractable for nonlinear constraints.
\end{remark}
\subsection{Examples of Model}
Obviously, at each time slot $t$ in Algorithm \ref{alg:MALM0}, the main task is to compute $x_{t+1}$ by solving the minimization problem (\ref{eq:alg1}). If we directly use $f_t$ and $g_t$ (instead of $F_{t,x_t}$ and $G_{t,x_t}$) in  (\ref{eq:aug}) as the traditional augmented Lagrangian methods, in general, solving the corresponding  problem (\ref{eq:alg1}) is a computational challenge. However, in the following we will introduce a list of examples to show that this issue can be alleviated by choosing the model carefully.

\textit{Linearized Model.} We first introduce  the following linearized model:
$$
\begin{array}{ll}
F_{t,x_t}(x):=f_t(x_t)+\langle u_t,x-x_t\rangle,\\[10pt]
G^{(i)}_{t,x_t}(x):=g^{(i)}_{t}(x_t)+\langle v^{(i)}_t,x-x_t\rangle,\ i=1,\ldots,p,	
\end{array}
$$
where $u_t\in\partial f_t(x_t)$ and $ v^{(i)}_t\in\partial g^{(i)}_{t}(x_t)$. In this case,  the associated problem (\ref{eq:alg1}) is almost equivalent to minimizing a strongly convex quadratic function over $\cC$. Furthermore, if $p=1$, the closed-form  solution $x_{t+1}$ to (\ref{eq:alg1}) can be explicitly calculated, see an example in Section \ref{subsec:olg}.

\textit{Quadratic-linearized Model.} Suppose that $f_t(\cdot)$ is $\iota$-strongly convex, i.e., for all $x,y$ and any $u\in\partial f_t(y)$,
\[
f_t(x)\geq f_t(y)+\langle u,x-y\rangle+\frac{\iota}{2}\|x-y\|^2.
\]
Then, we can replace $F_{t,x_t}(x)$ in the linearized model with the following quadratic approximation:
$$
F_{t,x_t}(x):=f_t(x_t)+\langle u_t,x-x_t\rangle+\frac{\iota}{2}\|x-x_t\|^2.
$$
Compared with linearized model, this  is a better approximation, and the associated problem (with an extra term $\frac{\iota}{2}\|x-x^t\|^2$) is easier to be solved.

\textit{Truncated Model.} If we assume that $f_t(x)\geq 0$ for all $x$, then  we can replace $F_{t,x_t}(x)$ in the linearized model with the following truncated model:
$$
F_{t,x_t}(x):=[f_t(x_t)+\langle u_t,x-x_t\rangle]_+.
$$
Again, it is a better approximation to $f_t$ than the linearized model, and the associated minimization problem is also very simple.

\textit{Plain Model.} Finally, for certain applications, if the functions $f_t$ and $g_t$ are simple enough (such as linear functions), we can directly use the plain model:
$$
F_{t,x_t}(x):=f_t(x),\quad G^{(i)}_{t,x_t}(x):=g^{(i)}_{t}(x),\ i=1,\ldots,p.
$$

It is trivial to check that the conditions in Assumption \ref{assu:model} are satisfied for all above examples. For certain special problems, other efficient models are possible to be designed. For the general problem with subdifferentiable loss and constraint functions, the linearized model is usually applicable and efficient enough.

\section{Augmented Lagrangian Methods for Constrained OCO with Feedback Delays}\label{sec:delay}
\subsection{Problem Statement and Algorithm Development}
In this subsection, we propose a class of model-based augmented Lagrangian methods  for solving time-varying constrained OCO with delayed function feedback.

As stated in Section \ref{sec:nodelay}, in the setting of traditional constrained OCO, the feedback information of functions $f_t$ and $g_t$ is received immediately after the decision $x_t$ is submitted. However, in various practical applications  \cite{BDS2019,YLYXCJ2021,PSLW2022}, the feedback is always delayed by a few rounds. In the following, we assumed that the functions $f_t$ and $g_t$ are received by the player at time slot $t+\tau$ after the decision $x_{t+\tau}$ is submitted, where $\tau\geq 0$ is the feedback delay. In other words, at time $t\geq \tau$, after the decision $x_t$ is submitted, the player receives the delayed feedback $f_{t-\tau}$ and $g_{t-\tau}$, and then make a new decision $x_{t+1}$. The performance of a decision sequence $\{x_t\}_{t\in[T]}$ is also evaluated by $\Reg(T)$ and $\Vio^{(i)}(T)$ defined in (\ref{eq:regret}). Nevertheless, here we should assume that the given time horizon $T>\tau$, otherwise the decision sequence is meaningless since no feedback has been received yet.

We now state the  model-based augmented Lagrangian methods for OCO with delayed function feedback. Recall the  augmented Lagrangian function is defined as follows,
\be\label{eq:aug}
\cL_{t,\sigma}(x,\lambda):=F_{t,x_t}(x)+\frac{1}{2\sigma}\left[\|[\lambda+\sigma G_{t,x_t}(x)]_+\|^2-\|\lambda\|^2\right].
\ee
At time slot $t\geq\tau$, after the decision $x_t$ and the multiplier $\lambda_t$ are determined,  the player receives the delayed functions $f_{t-\tau}$ and $g_{t-\tau}$. Then, a new action $x_{t+1}$ is taken by solving the following minimization problem:
\be\label{eq:alg1}
x_{t+1}=\argmin\limits_{x\in\cC}\left\{\cL_{t-\tau,\sigma}(x,\lambda_t)+\frac{\alpha}{2}\|x-x_{t-\tau}\|^2\right\}.
\ee
 Furthermore, the player updates the multiplier $\lambda_{t+1}$ by
\be\label{eq:alg2}
\lambda_{t+1}=[\lambda_t+\sigma G_{t-\tau,x_{t-\tau}}(x_{t+1})]_+.
\ee
The detail of  the methods is summarized in Algorithm \ref{alg:MALM}.
\begin{algorithm2e}[htp]
\caption{MALM for constrained OCO with feedback delays}
\label{alg:MALM}
\lnlset{alg:MALM0-1}{1}{Initialization: ~Choose an initial action $x_0 \in \cC$ and select parameters $\sigma>0,\alpha>0$.  Set $x_1=\cdots=x_{\tau}=x_0$ and $\lambda_0=\lambda_1=\cdots\lambda_{\tau}=0\in\R^p$.}
\\ \vspace{.5ex}
\For{$t=\tau,\tau+1,\ldots,\tau+T-1$}{
\lnlset{alg:MALM0-2}{2}{Submit the decision $x_t$.
} \\ \vspace{.5ex}
\lnlset{alg:MALM0-3}{3}{Receive the delayed feedback $f_{t-\tau}$ and $g_{t-\tau}$.
} \\ \vspace{.5ex}
\lnlset{alg:MALM4}{4}{Compute the new decision $x_{t+1}$ according to (\ref{eq:alg1}).
} \\ \vspace{.5ex}
\lnlset{alg:MALM4}{5}{Update the multiplier $\lambda_{t+1}$ according to (\ref{eq:alg2}).
}
\\}
\end{algorithm2e}

\begin{remark}\label{rem:reduce}
If we take $\tau=0$, it means no delay and the OCO problem considered in this section goes back to that in Section \ref{sec:nodelay}. Moreover, Algorithm \ref{alg:MALM} reduces to Algorithm \ref{alg:MALM0}.

Since  no feedback information is revealed in the first $\tau+1$ time slots, the decision maker has to choose the first $\tau+1$ actions blindly. In Algorithm \ref{alg:MALM}, we first choose an initial action $x_0\in\cC$  and an initial multiplier $\lambda_0=0$ similarly as in Algorithm \ref{alg:MALM0}. Then, we simply let the first $\tau+1$ decisions be $x_1=\cdots=x_{\tau}=x_0$ and the first $\tau+1$ multipliers be $\lambda_1=\cdots\lambda_{\tau}=\lambda_0$.

If the projection $\Pi_{\cC}(\cdot)$ onto $\cC$ is simple to compute, which is a standard assumption in the literature of OCO, the initial decision $x_0\in\cC$ can be easily obtained. For example, we can simply let $x_0=\Pi_{\cC}(0)$.
\end{remark}
\subsection{Properties of Model}
 To proceed the analysis on the performance of Algorithm \ref{alg:MALM}, we now study the properties of  the model $F_{t,x_t}$ and $G_{t,x_t}$. For any $u_t\in\partial F_{t,x_t}(x_t)$ and $x\in\cC$, from Assumption \ref{assu:model} it follows that
\[
\begin{array}{ll}
f_t(x)\geq F_{t,x_t}(x) \geq F_{t,x_t}(x_t)+\iprod{u_t}{x-x_t}\\[3pt]
\quad=f_t(x_t)+\iprod{u_t}{x-x_t},\
\end{array}
\]
which implies $u_t\in\partial f_t(x_t)$ and hence $\partial F_{t,x_t}(x_t)\subseteq\partial f_t(x_t)$. Similarly, we have
$\partial G^{(i)}_{t,x_t}(x_t)\subseteq\partial g^{(i)}_t(x_t)$. Therefore, from Assumption \ref{assu:subgradient} we get
\be\label{eq:subgradient}
\|u_t\|\leq\kappa_f,\quad \|v_t^{(i)}\|\leq\kappa_g,\ i=1,\ldots,p,
\ee
for any $u_t\in\partial F_{t,x_t}(x_t)$ and $v^{(i)}_t\in\partial G^{(i)}_{t,x_t}(x_t)$. Moreover, for any $x\in\cC$ it holds that
\be\label{eq:a1}
F_{t,x_t}(x) \geq f_t(x_t)+\iprod{u_t}{x-x_t} \geq f_t(x_t)-\kappa_f\|x-x_t\|,
\ee
and
\be\label{eq:a2}
G^{(i)}_{t,x_t}(x)  \geq g^{(i)}_t(x_t)-\kappa_g\|x-x_t\|,\ i=1,\ldots,p.
\ee

\subsection{Performance Analysis}
In this subsection, we shall analyze the performance of Algorithm \ref{alg:MALM} and show that the decision sequence generated by  Algorithm \ref{alg:MALM} possesses both sublinear regret and sublinear constraint violation.

For simplicity, we denote $t'=t-\tau$ in the following analysis.
One of the main challenges in the analysis is to establish the bound of the multiplier $\lambda_t$. In order to derive the bound, we first present a recursive relationship between $\|\lambda_{t+1}\|^2$ and $\|\lambda_t\|^2$.
\begin{lemma}\label{lem:recur}
Under Assumptions \ref{assu:bounded}-\ref{assu:model}, for any $t\geq\tau$ we have
\[
\begin{array}{ll}
\displaystyle\frac{1}{2\sigma}[\|\lambda_{t+1}\|^2-\|\lambda_t\|^2]\\[5pt]\leq \displaystyle 2\kappa_fD+\frac{\sigma}{2}\nu_g^2+\frac{\alpha}{2}(\|\widehat{x} -x_{t'}\|^2-\|\widehat{x} -x_{t+1}\|^2)-\varepsilon_0\|\lambda_t\|.
\end{array}
\]
\end{lemma}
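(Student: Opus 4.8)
The plan is to extract a recursive inequality by exploiting the optimality conditions for the subproblem~\eqref{eq:alg1} and the structure of the multiplier update~\eqref{eq:alg2}. First I would write down the first-order optimality condition for $x_{t+1}$: since $x_{t+1}$ minimizes the strongly convex function $\cL_{t',\sigma}(\cdot,\lambda_t)+\frac{\alpha}{2}\|\cdot-x_{t'}\|^2$ over $\cC$, there exist subgradients $u_{t'}\in\partial F_{t',x_{t'}}(x_{t+1})$ and $v^{(i)}_{t'}\in\partial G^{(i)}_{t',x_{t'}}(x_{t+1})$ such that, for all $y\in\cC$,
\[
\iprod{u_{t'}+\textstyle\sum_{i}[\lambda_t+\sigma G_{t',x_{t'}}(x_{t+1})]_+^{(i)}v^{(i)}_{t'}+\alpha(x_{t+1}-x_{t'})}{y-x_{t+1}}\geq 0.
\]
Recognizing $\lambda_{t+1}=[\lambda_t+\sigma G_{t',x_{t'}}(x_{t+1})]_+$ from~\eqref{eq:alg2}, this reads $\iprod{u_{t'}+\sum_i\lambda_{t+1}^{(i)}v^{(i)}_{t'}+\alpha(x_{t+1}-x_{t'})}{y-x_{t+1}}\geq 0$. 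I would evaluate this at the Slater point $y=\widehat{x}$.

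Next I would handle the multiplier term. From~\eqref{eq:alg2} and the elementary inequality $\|[a]_+\|^2\leq\|a\|^2+2\iprod{[a]_+}{b-a}$ applied appropriately — more directly, using $\|\lambda_{t+1}\|^2=\iprod{\lambda_{t+1}}{\lambda_t+\sigma G_{t',x_{t'}}(x_{t+1})}$ (since $\lambda_{t+1}^{(i)}(\lambda_{t+1}^{(i)}-\lambda_t^{(i)}-\sigma G^{(i)}_{t',x_{t'}}(x_{t+1}))=0$ whenever $\lambda_{t+1}^{(i)}>0$, and both sides vanish otherwise) — I get
\[
\tfrac{1}{2\sigma}\bigl[\|\lambda_{t+1}\|^2-\|\lambda_t\|^2\bigr]\le \iprod{\lambda_{t+1}}{G_{t',x_{t'}}(x_{t+1})}-\tfrac{1}{2\sigma}\|\lambda_{t+1}-\lambda_t\|^2.
\]
Now I combine this with the optimality inequality at $y=\widehat{x}$, which supplies an upper bound on $\iprod{\lambda_{t+1}}{\textstyle\sum_i v^{(i)}_{t'}(\cdots)}$; convexity of each $G^{(i)}_{t',x_{t'}}$ gives $\iprod{v^{(i)}_{t'}}{\widehat{x}-x_{t+1}}\le G^{(i)}_{t',x_{t'}}(\widehat{x})-G^{(i)}_{t',x_{t'}}(x_{t+1})$, so the $\iprod{\lambda_{t+1}}{G_{t',x_{t'}}(x_{t+1})}$ term is absorbed and what survives is essentially $\iprod{\lambda_{t+1}}{G_{t',x_{t'}}(\widehat{x})}$. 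Using Assumption~\ref{assu:model}(ii) and Assumption~\ref{assu:slater}, $G^{(i)}_{t',x_{t'}}(\widehat{x})\le g^{(i)}_{t'}(\widehat{x})\le-\varepsilon_0$, and since $\lambda_{t+1}\ge 0$ componentwise this yields $\iprod{\lambda_{t+1}}{G_{t',x_{t'}}(\widehat{x})}\le-\varepsilon_0\|\lambda_{t+1}\|_1\le-\varepsilon_0\|\lambda_{t+1}\|$. (A minor point: the bound in the statement has $\varepsilon_0\|\lambda_t\|$, not $\varepsilon_0\|\lambda_{t+1}\|$; I would bridge this either by the same argument evaluated with $\lambda_t$ in place of $\lambda_{t+1}$, or by noting $\|\lambda_{t+1}-\lambda_t\|\le\sigma\nu_g$ from~\eqref{eq:alg2} and Assumption~\ref{assu:model}(iii), and shuffling the $O(\sigma\nu_g^2)$ discrepancy into the $\frac{\sigma}{2}\nu_g^2$ term.)

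It remains to collect the leftover terms: the $\iprod{u_{t'}}{\widehat{x}-x_{t+1}}$ piece is bounded by $2\kappa_f D$ using $\|u_{t'}\|\le\kappa_f$ (from~\eqref{eq:subgradient}, valid since $u_{t'}\in\partial F_{t',x_{t'}}(x_{t+1})$ — strictly I should double-check the subgradient bound holds at $x_{t+1}$ and not only at the reference point, but Assumptions~\ref{assu:subgradient} and \ref{assu:model}(i) give $\partial F_{t',x_{t'}}(x_{t+1})\subseteq\partial f_{t'}(x_{t+1})$ by the same argument as in~\eqref{eq:subgradient}) together with $\|\widehat{x}-x_{t+1}\|\le D$ from Assumption~\ref{assu:bounded}; the proximal term $\alpha\iprod{x_{t+1}-x_{t'}}{\widehat{x}-x_{t+1}}$ telescopes via the identity $2\iprod{x_{t+1}-x_{t'}}{\widehat{x}-x_{t+1}}=\|\widehat{x}-x_{t'}\|^2-\|\widehat{x}-x_{t+1}\|^2-\|x_{t+1}-x_{t'}\|^2$, producing the $\frac{\alpha}{2}(\|\widehat{x}-x_{t'}\|^2-\|\widehat{x}-x_{t+1}\|^2)$ term and an extra nonpositive $-\frac{\alpha}{2}\|x_{t+1}-x_{t'}\|^2$ that is simply dropped; and the $\iprod{\lambda_{t+1}}{G_{t',x_{t'}}(x_{t+1})}$ term from the multiplier step has already been cancelled against the convexity estimate, with any residual controlled by $\frac{\sigma}{2}\|G_{t',x_{t'}}(x_{t+1})\|^2\le\frac{\sigma}{2}\nu_g^2$ via Assumption~\ref{assu:model}(iii) and completing the square against $-\frac{1}{2\sigma}\|\lambda_{t+1}-\lambda_t\|^2$. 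Adding everything gives exactly the claimed inequality. The main obstacle I anticipate is the bookkeeping around which argument ($\lambda_t$ versus $\lambda_{t+1}$, $x_{t+1}$ versus $x_{t'}$) each bound is applied at, and making the completion-of-the-square step tight enough that all error terms land inside $2\kappa_f D+\frac{\sigma}{2}\nu_g^2$ rather than leaving a stray constant; the algebra itself is routine once the optimality condition and the Slater point are plugged in correctly.
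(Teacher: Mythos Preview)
Your approach differs from the paper's: instead of writing the variational inequality at $x_{t+1}$ and using convexity of each $G^{(i)}$ componentwise, the paper observes that the optimality condition~\eqref{eq:b1} for~\eqref{eq:alg1} is equally the optimality condition for the auxiliary convex problem $\min_{x\in\cC}\cL_{t',\sigma}(x,\lambda_t)+\frac{\alpha}{2}(\|x-x_{t'}\|^2-\|x-x_{t+1}\|^2)$, and then simply compares the augmented Lagrangian \emph{values} at $x_{t+1}$ and $\widehat{x}$. Because the upper bound on $\cL_{t',\sigma}(\widehat{x},\lambda_t)$ naturally produces $\iprod{\lambda_t}{g_{t'}(\widehat{x})}$ (via $\|[\lambda_t+\sigma G_{t',x_{t'}}(\widehat{x})]_+\|^2\le\|\lambda_t+\sigma G_{t',x_{t'}}(\widehat{x})\|^2$), the $-\varepsilon_0\|\lambda_t\|$ term appears directly with no $\lambda_t$-versus-$\lambda_{t+1}$ conversion, and the $\frac{\sigma}{2}\nu_g^2$ comes from the cross term $\frac{\sigma}{2}\|G_{t',x_{t'}}(\widehat{x})\|^2$. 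Your route also reaches the goal (the conversion costs at most $\frac{\sigma}{2}\varepsilon_0^2\le\frac{\sigma}{2}\nu_g^2$ once you complete the square $\varepsilon_0\|\lambda_{t+1}-\lambda_t\|-\frac{1}{2\sigma}\|\lambda_{t+1}-\lambda_t\|^2$), but the bookkeeping is heavier and your description of where $\frac{\sigma}{2}\nu_g^2$ comes from is a bit garbled.

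There is, however, a genuine gap. The claim $\partial F_{t',x_{t'}}(x_{t+1})\subseteq\partial f_{t'}(x_{t+1})$ that you use to get $\|u_{t'}\|\le\kappa_f$ is \emph{false}: the argument leading to~\eqref{eq:subgradient} relies on $F_{t',x_{t'}}(x_{t'})=f_{t'}(x_{t'})$, which Assumption~\ref{assu:model}(i) gives only at the reference point $x_{t'}$, not at $x_{t+1}$ (there you have $F_{t',x_{t'}}(x_{t+1})\le f_{t'}(x_{t+1})$, the wrong direction). The fix is to bypass the subgradient bound and use convexity of $F_{t',x_{t'}}$ directly:
\[
\iprod{u_{t'}}{\widehat{x}-x_{t+1}}\le F_{t',x_{t'}}(\widehat{x})-F_{t',x_{t'}}(x_{t+1})\le f_{t'}(\widehat{x})-\bigl[f_{t'}(x_{t'})-\kappa_f\|x_{t+1}-x_{t'}\|\bigr]\le 2\kappa_f D,
\]
using~\eqref{eq:a1} together with Assumptions~\ref{assu:bounded}--\ref{assu:subgradient}. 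This recovers exactly the bound you need, and is in effect what the paper does when it bounds $\cL_{t',\sigma}(x_{t+1},\lambda_t)$ from below and $\cL_{t',\sigma}(\widehat{x},\lambda_t)$ from above.
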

\begin{proof}
The optimality condition of problem (\ref{eq:alg1}) reads as follows
\be\label{eq:b1}
0\in\partial_x\cL_{t',\sigma}(x_{t+1},\lambda_t)+\alpha(x_{t+1}-x_{t'})+\cN_{\cC}(x_{t+1}),
\ee
where $\cN_{\cC}(x_{t+1})$ stands for the normal cone of $\cC$ at $x_{t+1}$. We now consider an auxiliary optimization problem
\be\label{eq:b2}
\min_{x\in\cC}\cL_{t',\sigma}(x,\lambda_t)+\frac{\alpha}{2}(\|x-x_{t'}\|^2-\|x-x_{t+1}\|^2).
\ee
Since the objective function is convex, it holds that $\tilde{x}\in\cC$ is an optimal solution to (\ref{eq:b2}) if and only if
\[
0\in\partial_x\cL_{t',\sigma}(\tilde{x},\lambda_t)+\alpha(x_{t+1}-x_{t'})+\cN_{\cC}(\tilde{x}).
\]
Therefore, in view of (\ref{eq:b1}), we obtain that $x_{t+1}$ is an optimal solution to (\ref{eq:b2}). Hence, it holds that
\be\label{eq:b3}
\begin{array}{ll}
\cL_{t',\sigma}(x_{t+1},\lambda_t)+\frac{\alpha}{2}\|x_{t+1}-x_{t'}\|^2 \\[8pt] \leq\cL_{t',\sigma}(\widehat{x} ,\lambda_t)+\frac{\alpha}{2}(\|\widehat{x} -x_{t'}\|^2-\|\widehat{x} -x_{t+1}\|^2).
\end{array}
\ee
where $\widehat{x}\in\cC$ is given in Assumption \ref{assu:slater}. From (\ref{eq:aug}), (\ref{eq:alg2}) and (\ref{eq:a1}), we have
\[
\begin{array}{ll}
\cL_{t',\sigma}(x_{t+1},\lambda_t)\\[5pt]
=F_{t',x_{t'}}(x_{t+1})+\frac{1}{2\sigma}\left[\|[\lambda_t+\sigma G_{t',x_{t'}}(x_{t+1})]_+\|^2-\|\lambda_t\|^2\right]\\[5pt]
\geq f_{t'}(x_{t'})-\kappa_f\|x_{t+1}-x_{t'}\|+\frac{1}{2\sigma}\left(\|\lambda_{t+1}\|^2-\|\lambda_t\|^2\right).
\end{array}
\]
Moreover, using (\ref{eq:aug}) again we obtain
\[
\begin{array}{ll}
\cL_{t',\sigma}(\widehat{x} ,\lambda_t)\\[5pt]
=F_{t',x_{t'}}(\widehat{x} )+\frac{1}{2\sigma}\left[\|[\lambda_t+\sigma G_{t',x_{t'}}(\widehat{x} )]_+\|^2-\|\lambda_t\|^2\right]\\[5pt]
\leq f_{t'}(\widehat{x} )+\frac{1}{2\sigma}\left(2\sigma\iprod{\lambda_t}{G_{t',x_{t'}}(\widehat{x} )}+\sigma^2\|G_{t,x_t}(\widehat{x} )\|^2\right)\\[5pt]
\leq f_{t'}(\widehat{x} )+\iprod{\lambda_t}{g_{t'}(\widehat{x} )}+\frac{\sigma}{2}\nu_g^2,
\end{array}
\]
where the facts $F_{t',x_{t'}}(\widehat{x} )\leq f_{t'}(\widehat{x} )$ and $\|[\cdot]_+\|^2\leq\|\cdot\|^2$ are used to get the first inequality, the facts $\lambda_t\geq 0$, $G_{t',x_{t'}}(\widehat{x} )\leq g_{t'}(\widehat{x} )$ and $\|G_{t',x_{t'}}(\widehat{x} )\|\leq\nu_g$ (item (iii) in Assumption \ref{assu:model}) are applied to get the second inequality. Substituting the above two results (about $\cL_{t',\sigma}(x_{t+1},\lambda_t)$ and $\cL_{t',\sigma}(\widehat{x} ,\lambda_t)$)  into (\ref{eq:b3}) and rearranging  terms, we get
\be\label{eq:b4}
\begin{array}{ll}
\frac{1}{2\sigma}\left(\|\lambda_{t+1}\|^2-\|\lambda_t\|^2\right)+\frac{\alpha}{2}\|x_{t+1}-x_{t'}\|^2\\[5pt]
\leq f_{t'}(\widehat{x} )-f_{t'}(x_{t'}) +\iprod{\lambda_t}{g_{t'}(\widehat{x} )}+\kappa_f\|x_{t+1}-x_{t'}\|+\frac{\sigma}{2}\nu_g^2\\[5pt]+\frac{\alpha}{2}(\|\widehat{x} -x_{t'}\|^2-\|\widehat{x} -x_{t+1}\|^2).
\end{array}
\ee
From Assumptions \ref{assu:bounded} and \ref{assu:subgradient} it follows that $\|x_{t+1}-x_{t'}\|\leq D$ and
\[
f_{t'}(\widehat{x} )-f_{t'}(x_{t'})\leq\kappa_f\|\widehat{x} -x_{t'}\|\leq\kappa_fD.
\]
Furthermore, from Assumption \ref{assu:slater} and  $\|\lambda_t\|\leq \sum_{i=1}^p\lambda^{(i)}_t$ it holds  that
\[
 \iprod{\lambda_t}{g_{t'}(\widehat{x} )}\leq-\varepsilon_0\sum_{i=1}^p\lambda^{(i)}_t\leq -\varepsilon_0\|\lambda_t\|.
\]
 Finally, combining these results with (\ref{eq:b4}), we derive the claim.
\end{proof}

The following simple auxiliary result will be used several times in the analysis.
\begin{lemma}\label{lem:aux}
Consider a nonnegative scalar sequence $\{w_l\}$, and suppose that there exists a positive constant $W$ such that $w_l\leq W$ for any $l$. Assume that $\tau$ and $t'=t-\tau$ are defined as before. For any $t\geq\tau$, we have
\[
\sum_{l=0}^{s-1}[w_{t'+l}-w_{t+l+1}]\leq(\tau+1) W,
\]
where $s$ is an arbitrary positive integer.
\end{lemma}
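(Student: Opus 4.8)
The plan is to turn both sums into windows of one and the same subsequence and then cancel the overlap. Since $t=t'+\tau$, we have $w_{t+l+1}=w_{t'+(l+\tau+1)}$, so, reindexing the second sum by $m=l+\tau+1$,
\[
\sum_{l=0}^{s-1}\bigl[w_{t'+l}-w_{t+l+1}\bigr]=\sum_{l=0}^{s-1}w_{t'+l}-\sum_{l=\tau+1}^{s+\tau}w_{t'+l}.
\]
Because $t\geq\tau$ forces $t'=t-\tau\geq 0$, every index appearing above is a legitimate index of the sequence $\{w_l\}$, so the manipulation is valid.

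Next I would cancel the common terms. The index sets $\{0,1,\ldots,s-1\}$ and $\{\tau+1,\ldots,s+\tau\}$ intersect exactly in $\{\tau+1,\ldots,s-1\}$ (which is empty when $s\leq\tau+1$), and those terms cancel in the difference above. What remains on the positive side are the indices $l\in\{0,\ldots,\min\{s-1,\tau\}\}$, hence at most $\tau+1$ of them; what remains on the negative side are indices $l\geq s$, each contributing $-w_{t'+l}\leq 0$ by nonnegativity of $\{w_l\}$.

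Finally, discarding the nonpositive leftover and applying $w_l\leq W$ to each of the remaining (at most $\tau+1$) positive terms yields
\[
\sum_{l=0}^{s-1}\bigl[w_{t'+l}-w_{t+l+1}\bigr]\leq\sum_{l=0}^{\min\{s-1,\tau\}}w_{t'+l}\leq(\tau+1)\,W,
\]
which is the assertion. There is no genuine obstacle here; the only thing requiring care is the bookkeeping of the two index windows, in particular the degenerate case $s\leq\tau+1$ in which the windows are disjoint and the bound is immediate from $\sum_{l=0}^{s-1}w_{t'+l}\leq sW\leq(\tau+1)W$ — but the set-difference argument above handles every value of $s$ uniformly.
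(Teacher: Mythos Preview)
Your proof is correct and follows essentially the same approach as the paper: reindex the second sum so both run over $w_{t'+l}$, cancel the overlap, drop the remaining negative terms by nonnegativity, and bound the at most $\tau+1$ surviving positive terms by $W$. The paper treats the cases $s\leq\tau+1$ and $s>\tau+1$ separately, whereas you handle them in one stroke via $\min\{s-1,\tau\}$, but the underlying idea is identical.
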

\begin{proof}
If $s\leq\tau+1$, the claim is obvious since
\[
\sum_{l=0}^{s-1}[w_{t'+l}-w_{t+l+1}]\leq \sum_{l=0}^{s-1}w_{t'+l}\leq s W\leq(\tau+1) W.
\]
If $s>\tau+1$, noting that $t'=t-\tau$ and $$\sum_{l=\tau+1}^{s-1}w_{t-\tau+l}=\sum_{l=0}^{s-\tau-2}w_{t+l+1},$$ we have
\[
\begin{array}{ll}
\displaystyle\sum_{l=0}^{s-1}[w_{t'+l}-w_{t+l+1}]\\[8pt]
\displaystyle= \left(\sum_{l=0}^{\tau}w_{t'+l}+\sum_{l=\tau+1}^{s-1}w_{t'+l}\right)-\sum_{l=0}^{s-1}w_{t+l+1}\\[8pt]
\displaystyle\leq \left(\sum_{l=0}^{\tau}w_{t'+l}+\sum_{l=\tau+1}^{s-1}w_{t-\tau+l}\right)-\sum_{l=0}^{s-\tau-2}w_{t+l+1}\\[8pt]
\displaystyle=\sum_{l=0}^{\tau}w_{t'+l}\leq(\tau+1)W.
\end{array}
\]
The proof is completed.
\end{proof}

To simplify notations, for any positive integer $s$, let us define
\be\label{eq:psi}
\psi(\sigma,\alpha,s):=\kappa_0+(\tau+1)\kappa_1\frac{\alpha}{s}+\kappa_2\sigma+\kappa_3\sigma s,
\ee
where $\kappa_0,\kappa_1,\kappa_2,\kappa_3$ are nonnegative constants given by
\be\label{eq:kappa}
\begin{array}{ll}
\kappa_0=\frac{4\kappa_f D}{\varepsilon_0},\ \kappa_1=\frac{D^2}{\varepsilon_0},\ \kappa_2=\frac{\nu_g^2}{\varepsilon_0}-\nu_g,\\[8pt] \kappa_3=2\nu_g+\frac{\varepsilon_0}{2}+\frac{8\nu_g^2}{\varepsilon_0}\log\frac{32\nu_g^2}{\varepsilon_0^2}.
\end{array}
\ee
From Assumption \ref{assu:slater} and Assumption \ref{assu:model} it follows that
\[
G_{t,x_t}^{(i)}(\widehat{x} )\leq g_{t}^{(i)}(\widehat{x} )\leq -\varepsilon_0,
\]
and hence
\[
\nu_g\geq\|G_{t,x_t}(\widehat{x} )\|\geq\|g_{t}(\widehat{x} )\|\geq \sqrt{p}\varepsilon_0\geq \varepsilon_0,
\]
which indicates that $\kappa_2\geq 0$.
Next, based on Lemma \ref{lem:recur}, we present the following uniform bound of the multiplier $\lambda_t$.
\begin{lemma}\label{lem:multiplier}
Under Assumptions \ref{assu:bounded}-\ref{assu:model}, for any $t\geq\tau$ we have
\[
\|\lambda_t\|\leq \psi(\sigma,\alpha,s),
\]
where   $s$ is an arbitrary positive integer and $\psi(\sigma,\alpha,s)$ is defined in (\ref{eq:psi}).
\end{lemma}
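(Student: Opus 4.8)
\emph{Proof proposal.} The plan is to iterate the one–step estimate of Lemma~\ref{lem:recur} over a \emph{block} of $s$ consecutive rounds — neither a single round (whose $-\varepsilon_0\|\lambda_t\|$ drift is too weak) nor the whole horizon (over which the proximal terms in Lemma~\ref{lem:recur} do not telescope cleanly because of the delay shift $t'=t-\tau$). The block length $s$ is precisely the free parameter appearing in $\psi$. First I would rewrite Lemma~\ref{lem:recur} as
\[
\frac{1}{2\sigma}\|\lambda_{j+1}\|^2+\varepsilon_0\|\lambda_j\|\le\frac{1}{2\sigma}\|\lambda_j\|^2+2\kappa_fD+\frac{\sigma}{2}\nu_g^2+\frac{\alpha}{2}\big(\|\widehat{x}-x_{j-\tau}\|^2-\|\widehat{x}-x_{j+1}\|^2\big),\quad j\ge\tau,
\]
and sum over $j=t,\dots,t+s-1$ for an arbitrary $t\ge\tau$. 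The only non-telescoping contribution is $\sum_{l=0}^{s-1}\big(\|\widehat{x}-x_{t'+l}\|^2-\|\widehat{x}-x_{t+l+1}\|^2\big)$, which Lemma~\ref{lem:aux} bounds by $(\tau+1)D^2$ when applied with $w_l=\|\widehat{x}-x_l\|^2$ and $W=D^2$ (Assumption~\ref{assu:bounded}). This yields the block inequality
\[
\frac{1}{2\sigma}\|\lambda_{t+s}\|^2+\varepsilon_0\sum_{j=t}^{t+s-1}\|\lambda_j\|\ \le\ \frac{1}{2\sigma}\|\lambda_t\|^2+h(\sigma,\alpha,s),\qquad h(\sigma,\alpha,s):=2s\kappa_fD+\frac{s\sigma}{2}\nu_g^2+\frac{(\tau+1)\alpha D^2}{2}.
\]

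To turn this into a uniform bound I would first record a cheap one-step Lipschitz estimate for the multiplier: since $\lambda_j\ge0$ and $[\cdot]_+$ is $1$-Lipschitz, the update~(\ref{eq:alg2}) together with item~(iii) of Assumption~\ref{assu:model} gives $\|\lambda_{j+1}-\lambda_j\|\le\sigma\|G_{j-\tau,x_{j-\tau}}(x_{j+1})\|\le\sigma\nu_g$, hence $\big|\,\|\lambda_a\|-\|\lambda_b\|\,\big|\le|a-b|\,\sigma\nu_g$ for all $a,b\ge\tau$. Now let $T_0$ be a round at which $\|\lambda_t\|$ is largest over the (finite) run of the algorithm. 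If $T_0\le\tau+s-1$, then $\lambda_\tau=0$ and the Lipschitz estimate give $\|\lambda_{T_0}\|\le(s-1)\sigma\nu_g$. Otherwise $T_0\ge\tau+s$; applying the block inequality with $t=T_0-s\ge\tau$ and using $\|\lambda_{T_0-s}\|\le\|\lambda_{T_0}\|$, the two quadratic terms cancel and one is left with $\varepsilon_0\sum_{j=T_0-s}^{T_0-1}\|\lambda_j\|\le h(\sigma,\alpha,s)$. Picking the smallest summand $\|\lambda_{j^{*}}\|$ gives $\|\lambda_{j^{*}}\|\le h(\sigma,\alpha,s)/(\varepsilon_0 s)$, and the Lipschitz estimate (with $|T_0-j^{*}|\le s$) yields
\[
\|\lambda_t\|\le\|\lambda_{T_0}\|\le\frac{h(\sigma,\alpha,s)}{\varepsilon_0 s}+s\sigma\nu_g=\frac{2\kappa_fD}{\varepsilon_0}+\frac{(\tau+1)\alpha D^2}{2\varepsilon_0 s}+\frac{\sigma\nu_g^2}{2\varepsilon_0}+s\sigma\nu_g\qquad\text{for all }t\ge\tau.
\]
It then remains to check, coefficient by coefficient in $1$, $\alpha/s$, $\sigma$ and $\sigma s$, that the right-hand side is dominated by $\psi(\sigma,\alpha,s)$ of (\ref{eq:psi})–(\ref{eq:kappa}); using $\nu_g\ge\varepsilon_0$ (already used to see $\kappa_2\ge0$) this holds with room to spare, so in fact the logarithmic part of $\kappa_3$ is not even needed for this route. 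One may equally replace the ``maximal round'' device by an induction on $t$ over blocks of length $s$, avoiding any appeal to finiteness of the horizon.

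I expect the delicate part to be the first paragraph: fixing the block length and the window of summation so that Lemma~\ref{lem:aux} is invoked with the intended anchor, and making sure that after the delayed, inexact telescoping the only surviving residual is exactly $(\tau+1)D^2$ — a single misaligned index changes this constant and hence the coefficient $\kappa_1$. The passage from ``the block-average of $\|\lambda_j\|$ is controlled'' to ``$\|\lambda_t\|$ itself is controlled'' is then routine given the $\sigma\nu_g$ per-step bound, and the concluding comparison with $\psi$ is pure bookkeeping.
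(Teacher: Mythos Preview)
Your argument is correct and takes a genuinely different route from the paper. The paper does not argue via a maximal round; instead, after deriving the same block inequality you obtain (by summing Lemma~\ref{lem:recur} over $s$ steps and invoking Lemma~\ref{lem:aux}), it combines the block drift with the one-step bound $|\,\|\lambda_{t+1}\|-\|\lambda_t\|\,|\le\sigma\nu_g$ to verify the hypotheses of the deterministic Lyapunov-drift Lemma~\ref{lem:l7}: bounded increments $\delta_{\max}=\sigma\nu_g$, and a guaranteed decrease $\|\lambda_{t+s}\|-\|\lambda_t\|\le-\sigma\varepsilon_0 s/2$ whenever $\|\lambda_t\|$ exceeds the threshold $\theta$ in~(\ref{eq:b7}). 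Lemma~\ref{lem:l7} then outputs exactly $\psi(\sigma,\alpha,s)$, and the logarithmic piece of $\kappa_3$ is precisely the overshoot term $t_0\frac{4\delta_{\max}^2}{\zeta}\log\frac{8\delta_{\max}^2}{\zeta^2}$ produced by that lemma.

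Your ``pick the maximal round $T_0$ and look $s$ steps back'' device is more elementary: it bypasses Lemma~\ref{lem:l7} entirely and, as you noticed, yields a bound strictly smaller than $\psi$ (the logarithmic part of $\kappa_3$ is slack). The only place to be slightly careful in your coefficient comparison is the $\sigma$-term: $\nu_g^2/(2\varepsilon_0)\le\kappa_2=\nu_g^2/\varepsilon_0-\nu_g$ would require $\nu_g\ge2\varepsilon_0$, which is not guaranteed by $\nu_g\ge\varepsilon_0$ alone; but the shortfall is at most $\nu_g\sigma$ and is absorbed by the surplus $(\kappa_3-\nu_g)\sigma s\ge\nu_g\sigma$ in the $\sigma s$-term, so the domination by $\psi$ does hold. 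The trade-off is that the paper's route packages the recurrence analysis into a reusable drift lemma, while yours is self-contained and sharper for this specific statement.
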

\begin{proof}
The claim can be derived by applying Lemma \ref{lem:l7} in the appendix, and hence the main task of this proof is to verify the conditions in Lemma \ref{lem:l7} are satisfied with respect to the sequence $\{\|\lambda_t\|\}_{t\in[T]}$. In particular, it is required to prove: (i) $|(\|\lambda_{t+1}\|-\|\lambda_t\|)|\leq\sigma\nu_g$; (ii) for any positive integer $s$, it holds that $\|\lambda_{t+s}\|-\|\lambda_t\|\leq-\frac{\sigma\varepsilon_0s}{2}$ if $\|\lambda_t\|\geq \theta$, where $\theta$ is defined by
\[
 \theta= \frac{\sigma\varepsilon_0s}{2}+\sigma \nu_g(s-1)+ \frac{\alpha(\tau+1) D^2}{\varepsilon_0s}+\frac{4\kappa_f D}{\varepsilon_0}+ \frac{\sigma \nu_g^2}{\varepsilon_0}.
 \]
In view of (\ref{eq:kappa}), $\theta$ can be rewritten as
\be\label{eq:b7}
\theta=\kappa_0+(\tau+1)\kappa_1\frac{\alpha}{s}+\kappa_2\sigma+(\nu_g+\varepsilon_0/2)\sigma s.
\ee

Noting that the operator $[\cdot]_+$ is non-expansive, that is, $\|[a]_+-[b]_+\|\leq \|a-b\|$ for all $a,b\in\R^p$, we have from (\ref{eq:alg2}) that
\[
\begin{array}{ll}
|(\|\lambda_{t+1}\|-\|\lambda_t\|)|\\[8pt]
\leq\|\lambda_{t+1}-\lambda_t\|=\|[\lambda_t+\sigma G_{t',x_{t'}}(x_{t+1})]_+-[\lambda_t]_+\|\\[8pt]
\leq \sigma\|G_{t',x_{t'}}(x_{t+1})\|\leq \sigma\nu_g,
\end{array}
\]
which gives item (i). Moreover, the above inequality also implies that $\|\lambda_{t+1}\|-\|\lambda_t\|\geq-\sigma\nu_g$, which further gives
\be\label{eq:b5}
\|\lambda_{t+l}\|-\|\lambda_t\|\geq-\sigma\nu_gl,
\ee
where $l\in\{0,1,\ldots,s-1\}$.

We now prove item (ii) under the condition $\|\lambda_t\|\geq\theta$. For any $l\in\{0,1,\ldots,s-1\}$, from Lemma \ref{lem:recur} it follows that
\[
\begin{array}{ll}
\displaystyle\frac{1}{2\sigma}[\|\lambda_{t+l+1}\|^2-\|\lambda_{t+l}\|^2]\leq 2\kappa_fD+\frac{\sigma}{2}\nu_g^2\\[5pt]
\displaystyle\quad\quad\quad+\frac{\alpha}{2}(\|\widehat{x} -x_{t'+l}\|^2-\|\widehat{x} -x_{t+l+1}\|^2)-\varepsilon_0\|\lambda_{t+l}\|,
\end{array}
\]
in which we use  $(t+l)'=t+l-\tau=t'+l$.
Making a summation over $l\in\{0,1,\ldots,s-1\}$, we have
\be\label{eq:b6}
\begin{array}{ll}
\displaystyle\frac{1}{2\sigma}[\|\lambda_{t+s}\|^2-\|\lambda_t\|^2]\\[8pt]
\displaystyle\leq(2\kappa_fD+\frac{\sigma}{2}\nu_g^2)s+\frac{\alpha(\tau+1)}{2}D^2-\varepsilon_0\sum_{l=0}^{s-1}\|\lambda_{t+l}\|,
\end{array}
\ee
where the fact that \[\sum_{l=0}^{s-1}(\|\widehat{x} -x_{t'+l}\|^2-\|\widehat{x} -x_{t+l+1}\|^2)\leq(\tau+1)D^2\] (see Lemma \ref{lem:aux}) is used.
In view of (\ref{eq:b5}) and the condition $\|\lambda_t\|\geq\theta$, we get
\[
\begin{array}{ll}
\displaystyle\sum_{l=0}^{s-1}\|\lambda_{t+l}\|\geq\sum_{l=0}^{s-1}(\|\lambda_t\|-\sigma\nu_gl)\\[5pt]
\displaystyle=s\|\lambda_t\|-\frac{s(s-1)\sigma\nu_g}{2}\geq\frac{s}{2}(\|\lambda_t\|+\theta)-\frac{s(s-1)\sigma\nu_g}{2}.
\end{array}
\]
Let us plug this inequality into (\ref{eq:b6}) and rearrange  terms,
 \[
 \|\lambda_{t+s}\|^2\leq \|\lambda_t\|^2-\sigma\varepsilon_0s\|\lambda_t\| \leq\left(\|\lambda_t\|-\frac{\sigma\varepsilon_0s}{2}\right)^2.
 \]
 This, together with the fact that $\|\lambda_t\|\geq\theta\geq\frac{\sigma\varepsilon_0s}{2}$, implies item (ii).

 Finally, observing items (i) and (ii), we have that  the conditions in Lemma \ref{lem:l7} are satisfied with respect to the sequence $\{\|\lambda_t\|\}_{t\in[T]}$. Therefore, it follows that
 \[
 \begin{array}{ll}
 \|\lambda_t\|&\leq \theta + s\sigma\nu_g+s\frac{8\sigma\nu_g^2}{\varepsilon_0}\log\frac{32\nu_g^2}{\varepsilon_0^2}\\[5pt]
 &\overset{(\mbox{a})}{=}\kappa_0+(\tau+1)\kappa_1\frac{\alpha}{s}+\kappa_2\sigma+\kappa_3\sigma s=\psi(\sigma,\alpha,s),
 \end{array}
 \]
 where (a) is due to (\ref{eq:b7}) and (\ref{eq:kappa}).
 The proof is completed.
\end{proof}

Now, we are ready to present the sublinear regret of  Algorithm \ref{alg:MALM} for constrained OCO if we set the algorithm parameters  as $\alpha=\sqrt{T/(\tau+1)}$ and $\sigma=\sqrt{(\tau+1)/T}$.
\begin{theorem}\label{th:regret}
Suppose Assumptions \ref{assu:bounded}-\ref{assu:model} hold. Set $\alpha=\sqrt{\frac{T}{\tau+1}}$ and $\sigma=\sqrt{\frac{\tau+1}{T}}$. Then, the regret of Algorithm \ref{alg:MALM} satisfies
\[
\sum_{t=0}^{T-1}[f_t(x_t)-f_t(x^*)]\leq\frac{\kappa_f^2+\nu_g^2+D^2}{2}\sqrt{(\tau+1) T},
\]
where $x^*$ is defined in (\ref{eq:xstar}).
\end{theorem}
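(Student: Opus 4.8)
Proof proposal. The plan is to run the argument used in the first half of the proof of Lemma~\ref{lem:recur}, but with the Slater point $\widehat{x}$ replaced by the offline optimum $x^*$, and then to telescope the resulting per-round inequality over the loop $t=\tau,\dots,\tau+T-1$. Note that at loop step $t$ the algorithm processes the delayed pair $(f_{t'},g_{t'})$ with $t'=t-\tau$, so as $t$ ranges over the loop the index $t'$ ranges over $0,\dots,T-1$, and the decision $x_{t'}$ submitted at round $t'$ is one of the decisions counted in $\Reg(T)$. Hence it suffices to produce, for each loop step $t$, an upper bound on $f_{t'}(x_{t'})-f_{t'}(x^*)$ and sum.

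First I would record the one-step inequality. Exactly as in the derivation of (\ref{eq:b1})--(\ref{eq:b3}) (the auxiliary-problem argument works for any point of $\cC$, in particular for $x^*\in\cC$),
\[
\cL_{t',\sigma}(x_{t+1},\lambda_t)+\tfrac{\alpha}{2}\|x_{t+1}-x_{t'}\|^2\le\cL_{t',\sigma}(x^*,\lambda_t)+\tfrac{\alpha}{2}\bigl(\|x^*-x_{t'}\|^2-\|x^*-x_{t+1}\|^2\bigr).
\]
For the left-hand side I would substitute $\lambda_{t+1}=[\lambda_t+\sigma G_{t',x_{t'}}(x_{t+1})]_+$ from (\ref{eq:alg2}) and use the model lower bound (\ref{eq:a1}) to obtain $\cL_{t',\sigma}(x_{t+1},\lambda_t)\ge f_{t'}(x_{t'})-\kappa_f\|x_{t+1}-x_{t'}\|+\tfrac{1}{2\sigma}(\|\lambda_{t+1}\|^2-\|\lambda_t\|^2)$. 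For the right-hand side, feasibility of $x^*$ in (\ref{eq:xstar}) together with Assumption~\ref{assu:model}(ii) gives $G_{t',x_{t'}}(x^*)\le g_{t'}(x^*)\le 0$; combining this with $\lambda_t\ge 0$, $\|[\,\cdot\,]_+\|^2\le\|\cdot\|^2$, $F_{t',x_{t'}}(x^*)\le f_{t'}(x^*)$ and item~(iii) of Assumption~\ref{assu:model} yields $\cL_{t',\sigma}(x^*,\lambda_t)\le f_{t'}(x^*)+\tfrac{\sigma}{2}\nu_g^2$ — this is the very estimate used for $\cL_{t',\sigma}(\widehat{x},\lambda_t)$ in Lemma~\ref{lem:recur}, and it is where the $\nu_g^2$ term in the statement comes from. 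Plugging the two bounds in and absorbing the cross term via $\kappa_f\|x_{t+1}-x_{t'}\|-\tfrac{\alpha}{2}\|x_{t+1}-x_{t'}\|^2\le\tfrac{\kappa_f^2}{2\alpha}$ leaves
\[
f_{t'}(x_{t'})-f_{t'}(x^*)\le\frac{\kappa_f^2}{2\alpha}+\frac{\sigma\nu_g^2}{2}-\frac{1}{2\sigma}\bigl(\|\lambda_{t+1}\|^2-\|\lambda_t\|^2\bigr)+\frac{\alpha}{2}\bigl(\|x^*-x_{t'}\|^2-\|x^*-x_{t+1}\|^2\bigr).
\]

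Finally I would sum this over $t=\tau,\dots,\tau+T-1$. The multiplier differences telescope to $\tfrac{1}{2\sigma}(\|\lambda_\tau\|^2-\|\lambda_{\tau+T}\|^2)\le 0$ since $\lambda_\tau=0$, so that block of terms is simply dropped. The proximal distance differences are the one genuinely delay-specific point: they do \emph{not} telescope, because $t'=t-\tau$ and $t+1$ are $\tau+1$ apart rather than adjacent. This is exactly the situation handled by Lemma~\ref{lem:aux}, applied to $w_l=\|x^*-x_l\|^2$, which is bounded by $D^2$ via Assumption~\ref{assu:bounded}; it gives $\sum_{t=\tau}^{\tau+T-1}\bigl(\|x^*-x_{t'}\|^2-\|x^*-x_{t+1}\|^2\bigr)\le(\tau+1)D^2$. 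Therefore
\[
\sum_{t=0}^{T-1}[f_t(x_t)-f_t(x^*)]\le\frac{\kappa_f^2 T}{2\alpha}+\frac{\sigma\nu_g^2 T}{2}+\frac{\alpha(\tau+1)D^2}{2},
\]
and substituting $\alpha=\sqrt{T/(\tau+1)}$ and $\sigma=\sqrt{(\tau+1)/T}$ makes each of the three summands equal to $\tfrac12(\cdot)\sqrt{(\tau+1)T}$, which is the claimed bound. The main obstacle is mild and entirely confined to that last step: the delay forces a telescoping sum to be replaced by the $(\tau+1)D^2$ estimate of Lemma~\ref{lem:aux}; everything else is the standard one-step analysis of a proximal method of multipliers combined with the two-sided model bounds of Assumption~\ref{assu:model}.
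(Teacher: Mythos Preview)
Your proposal is correct and follows essentially the same route as the paper: reuse the one-step inequality from the proof of Lemma~\ref{lem:recur} (equation~(\ref{eq:b4})) with $x^*$ in place of $\widehat{x}$, drop the term $\iprod{\lambda_t}{g_{t'}(x^*)}\le 0$, absorb $\kappa_f\|x_{t+1}-x_{t'}\|-\tfrac{\alpha}{2}\|x_{t+1}-x_{t'}\|^2\le\tfrac{\kappa_f^2}{2\alpha}$, telescope the multiplier terms using $\lambda_\tau=0$, and handle the non-adjacent proximal-distance sum via Lemma~\ref{lem:aux}. The paper's own proof is organized identically.
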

\begin{proof}
For any $t\geq\tau$, let us replace $\widehat{x}$ with $x^*$ in (\ref{eq:b4}) and rearrange terms,
\[
\begin{array}{ll}
f_{t'}(x_{t'})-f_{t'}(x^* )+\frac{1}{2\sigma}\left(\|\lambda_{t+1}\|^2-\|\lambda_t\|^2\right)\\[5pt]
\leq  -\frac{\alpha}{2}\|x_{t+1}-x_{t'}\|^2+\kappa_f\|x_{t+1}-x_{t'}\|+\iprod{\lambda_t}{g_{t'}(x^* )}+\frac{\sigma}{2}\nu_g^2\\[5pt]+\frac{\alpha}{2}(\|x^* -x_{t'}\|^2-\|x^* -x_{t+1}\|^2).
\end{array}
\]
Combining with the facts that \[-\frac{\alpha}{2}\|x_{t+1}-x_{t'}\|^2+\kappa_f\|x_{t+1}-x_{t'}\|\leq\frac{\kappa_f^2}{2\alpha}\] and $\iprod{\lambda_t}{g_{t'}(x^* )}\leq 0$ (due to $\lambda_t\geq 0$ and $g_{t'}(x^*)<0$), we obtain
\[
\begin{array}{ll}
f_{t'}(x_{t'})-f_{t'}(x^* )+\frac{1}{2\sigma}\left(\|\lambda_{t+1}\|^2-\|\lambda_t\|^2\right)\\[5pt]
\leq  \frac{\kappa_f^2}{2\alpha}+\frac{\sigma}{2}\nu_g^2+\frac{\alpha}{2}(\|x^* -x_{t'}\|^2-\|x^* -x_{t+1}\|^2).
\end{array}
\]
Summing it for $t$ running from $\tau$ to $\tau+T-1$,
we get
\be\label{eq:b8}
\sum_{t=\tau}^{\tau+T-1}[f_{t'}(x_{t'})-f_{t'}(x^* )]\leq \frac{\kappa_f^2}{2\alpha}T+\frac{\sigma \nu_g^2}{2}T+\frac{\alpha}{2}(\tau+1)D^2,
\ee
where we have used $\lambda_{\tau}=0$ and the fact (take $t=\tau$ in Lemma \ref{lem:aux}) that
\[
\begin{array}{ll}
\displaystyle\sum_{t=\tau}^{\tau+T-1}(\|x^* -x_{t'}\|^2-\|x^* -x_{t+1}\|^2)\\[12pt]
\displaystyle=\sum_{l=0}^{T-1}(\|x^* -x_{l}\|^2-\|x^* -x_{\tau+l+1}\|^2)\leq(\tau+1)D^2.
\end{array}
\]
Finally, using $\alpha=\sqrt{T/(\tau+1)}, \sigma=\sqrt{(\tau+1)/T}$ and that fact that
\[
\sum_{t=0}^{T-1}[f_t(x_t)-f_t(x^*)]=\sum_{t=\tau}^{\tau+T-1}[f_{t'}(x_{t'})-f_{t'}(x^* )],
\]
we derive the claim from (\ref{eq:b8}).
\end{proof}

From Theorem \ref{th:regret}, it follows that the regret of Algorithm \ref{alg:MALM} for constrained OCO with feedback delays is bounded by $\Reg(T)\leq\cO(\sqrt{\tau T})$, which is in the same order as \cite{CZP2021}. However, in contrast to the parameter-free stepsizes in Algorithm \ref{alg:MALM}, the stepsize in the Algoirthm of \cite{CZP2021} is dependent on the subgradient bound of the constraint functions.

In the following theorem, it is shown that Algorithm \ref{alg:MALM} posseses sublinear constraint violation if we set the algorithm parameters  as $\alpha=\sqrt{T/(\tau+1)}, \sigma=\sqrt{(\tau+1)/T}$ and the time horizon $T$ is large enough such that $T>(\tau+1)p\kappa_g^2$.
\begin{theorem}\label{th:constraint}
Suppose Assumptions \ref{assu:bounded}-\ref{assu:model} hold. Set $\alpha=\sqrt{\frac{T}{\tau+1}}$ and $\sigma=\sqrt{\frac{\tau+1}{T}}$. Assume $T$ is large enough such that $T>(\tau+1)p\kappa_g^2$.  Then, for $i=1,\ldots,p$, the constraint violation of Algorithm \ref{alg:MALM} satisfies
\[
\begin{array}{ll}
\displaystyle\sum_{t=0}^{T-1}g^{(i)}_t(x_t)\\[10pt]
\leq[\kappa_2+2\sqrt{p}\kappa_g^2(\nu_g+\kappa_2)(\tau+1)]
+(\kappa_0+\kappa_1)(\tau+1)^{-\frac{1}{2}}\sqrt{T}\\[8pt]
+[2\kappa_3+2\kappa_g\kappa_f+2\sqrt{p}\kappa_g^2(\kappa_0+\kappa_1)](\tau+1)^{\frac{1}{2}}\sqrt{T}\\[8pt]
+4\sqrt{p}\kappa_g^2\kappa_3(\tau+1)^{\frac{3}{2}}\sqrt{T},
\end{array}
\]
where $\kappa_0,\kappa_1,\kappa_2,\kappa_3$ are constants given in (\ref{eq:kappa}).
\end{theorem}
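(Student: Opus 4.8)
The plan is to bound the constraint violation by comparing the telescoped recursion for $\|\lambda_t\|^2$ against $G_{t',x_{t'}}$, and then translating back from $G$ to $g$ using the conservative-model inequality (\ref{eq:a2}). First I would recall from the multiplier update (\ref{eq:alg2}) that $\lambda_{t+1} \geq \lambda_t + \sigma G_{t',x_{t'}}(x_{t+1})$ componentwise, so that $G^{(i)}_{t',x_{t'}}(x_{t+1}) \leq \frac{1}{\sigma}(\lambda^{(i)}_{t+1} - \lambda^{(i)}_t) \leq \frac{1}{\sigma}(\|\lambda_{t+1}\| - \|\lambda_t\|) + (\text{something})$; more carefully, I would sum $G^{(i)}_{t',x_{t'}}(x_{t+1})$ over $t$ from $\tau$ to $\tau+T-1$, telescoping the multiplier differences, and use $\lambda_\tau = 0$ together with the uniform bound $\|\lambda_t\| \leq \psi(\sigma,\alpha,s)$ from Lemma \ref{lem:multiplier} to control the resulting $\frac{1}{\sigma}\|\lambda_{\tau+T}\|$ term. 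This gives a bound on $\sum_t G^{(i)}_{t',x_{t'}}(x_{t+1})$ of order $\frac{1}{\sigma}\psi(\sigma,\alpha,s)$.

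Next I would pass from $G^{(i)}_{t',x_{t'}}(x_{t+1})$ to $g^{(i)}_t(x_t)$ in two steps: (i) convert the argument $x_{t+1}$ to $x_{t'}$ using $\|x_{t+1} - x_{t'}\|$ bounds, invoking (\ref{eq:a2}) so that $g^{(i)}_{t'}(x_{t'}) \leq G^{(i)}_{t',x_{t'}}(x_{t+1}) + \kappa_g\|x_{t+1}-x_{t'}\|$; and (ii) realign the time index, using $\sum_{t=\tau}^{\tau+T-1} g^{(i)}_{t'}(x_{t'}) = \sum_{t=0}^{T-1} g^{(i)}_t(x_t)$. The term $\sum_t \|x_{t+1} - x_{t'}\|$ needs to be bounded; here I expect to use the proximal step, which forces $x_{t+1}$ close to $x_{t'}$ — from the optimality condition (\ref{eq:b1}) and the subgradient bounds (\ref{eq:subgradient}) together with $\|\lambda_t\| \leq \psi$, one gets $\|x_{t+1}-x_{t'}\| \leq \frac{1}{\alpha}(\kappa_f + \kappa_g\|\lambda_t\| + \sigma\kappa_g \nu_g \cdot p\text{-factor})$ or similar, so the sum is $O(\frac{T}{\alpha}(\kappa_f + \kappa_g\psi))$.

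The crucial subtlety — and where I expect the main obstacle — is choosing the free integer parameter $s$ in $\psi(\sigma,\alpha,s)$ optimally, and handling the extra feedback-delay factors that propagate through $\psi$. With $\alpha = \sqrt{T/(\tau+1)}$, $\sigma = \sqrt{(\tau+1)/T}$, we have $\frac{\alpha}{s} = \frac{1}{s}\sqrt{T/(\tau+1)}$ and $\sigma s = s\sqrt{(\tau+1)/T}$, so balancing the $(\tau+1)\kappa_1 \alpha/s$ and $\kappa_3 \sigma s$ terms suggests $s \asymp \sqrt{\tau+1}\cdot(T/T)^{1/2}$-type scaling; one must track how the resulting $\psi$ depends on $(\tau+1)$ and $T$ and then how $\frac{1}{\sigma}\psi$ and $\frac{T}{\alpha}\psi$ scale. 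The condition $T > (\tau+1)p\kappa_g^2$ is what guarantees $\sigma\sqrt{p}\kappa_g < 1$ (or a similar coefficient stays below $1$), which is needed to absorb a term of the form $\sigma\sqrt{p}\kappa_g^2 \sum_t(\cdots)$ back into the left-hand side — this absorption step is the delicate bookkeeping that produces the precise constants $\kappa_2 + 2\sqrt{p}\kappa_g^2(\nu_g + \kappa_2)(\tau+1)$ and the four separate powers of $(\tau+1)$ in the statement. I would carry out the final substitution of $\alpha,\sigma$ and the optimal $s$ last, collecting terms by their $(\tau+1)$-power to match the claimed bound, with the dominant contribution being the $4\sqrt{p}\kappa_g^2\kappa_3(\tau+1)^{3/2}\sqrt{T}$ term coming from the $\kappa_3\sigma s$ piece of $\psi$ interacting with the $\sigma\sqrt{p}\kappa_g^2$-weighted sum.
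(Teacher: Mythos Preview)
Your overall architecture---telescope the multiplier update to bound $\sum_t G^{(i)}_{t',x_{t'}}(x_{t+1})$ by $\tfrac{1}{\sigma}\|\lambda_{\tau+T}\|$, convert to $g^{(i)}_{t'}(x_{t'})$ via (\ref{eq:a2}) at the cost of $\kappa_g\sum_t\|x_{t+1}-x_{t'}\|$, then plug in the multiplier bound $\psi(\sigma,\alpha,s)$ with a well-chosen $s$---matches the paper's proof. The gap is in how you obtain the per-step displacement bound $\|x_{t+1}-x_{t'}\|$, and relatedly where the hypothesis $T>(\tau+1)p\kappa_g^2$ actually enters.

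You propose to read off $\|x_{t+1}-x_{t'}\|\le \tfrac{1}{\alpha}(\kappa_f+\kappa_g\|\lambda_t\|+\cdots)$ directly from the optimality condition (\ref{eq:b1}) using the subgradient bounds (\ref{eq:subgradient}). That does not work as stated: (\ref{eq:subgradient}) bounds subgradients of the \emph{model} functions $F_{t',x_{t'}}$ and $G^{(i)}_{t',x_{t'}}$ only at the anchor $x_{t'}$, whereas the optimality condition involves their subgradients at $x_{t+1}$, for which no uniform bound is assumed. The paper instead compares \emph{function values} at $x_{t+1}$ and $x_{t'}$: setting $\widehat{x}=x_{t'}$ in (\ref{eq:b3}) gives
\[
\alpha\|x_{t+1}-x_{t'}\|^2\le f_{t'}(x_{t'})-F_{t',x_{t'}}(x_{t+1})+\tfrac{1}{2\sigma}\sum_i(a_i^2-b_i^2),
\]
with $a_i=[\lambda_t^{(i)}+\sigma G^{(i)}_{t',x_{t'}}(x_{t'})]_+$ and $b_i=[\lambda_t^{(i)}+\sigma G^{(i)}_{t',x_{t'}}(x_{t+1})]_+$. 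Bounding $a_i-b_i\le\sigma\kappa_g\|x_{t+1}-x_{t'}\|$ and $\sum_i b_i\le\sqrt{p}(\|\lambda_t\|+\sigma\nu_g)$ yields a quadratic inequality in $\|x_{t+1}-x_{t'}\|$ with leading coefficient $(2\alpha-p\kappa_g^2\sigma)$. \emph{This} is where $T>(\tau+1)p\kappa_g^2$ is used: it guarantees $2\alpha-p\kappa_g^2\sigma>\alpha$, so that after dividing through one obtains (\ref{eq:c6}). It is not an ``absorb-into-the-left'' step over a sum, but a positivity condition inside the single-step bound. Finally, the correct choice is $s\asymp\sqrt{T(\tau+1)}$ (so that $\alpha/s\le 1/(\tau+1)$ and $\sigma s\le 2(\tau+1)$), which simplifies $\psi$ to $\kappa_0+\kappa_1+\kappa_2\sigma+2\kappa_3(\tau+1)$; your ``$\sqrt{\tau+1}\cdot(T/T)^{1/2}$'' is not the right scaling.
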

\begin{proof}
Consider $t\geq\tau$.
For any $i=1,\ldots,p$, let us first denote
\be\label{eq:c3}
a_i=[\lambda^{(i)}_t+\sigma G^{(i)}_{t',x_{t'}}(x_{t'})]_+,\quad b_i=[\lambda^{(i)}_t+\sigma G^{(i)}_{t',x_{t'}}(x_{t+1})]_+.
\ee
Noting that $G^{(i)}_{t',x_{t'}}(x_{t'})=g_{t'}^{(i)}(x_{t'})$, we have from (\ref{eq:a2}) that
\be\label{eq:c1}
a_i-b_i\leq \sigma|G^{(i)}_{t',x_{t'}}(x_{t'})- G^{(i)}_{t',x_{t'}}(x_{t+1})|\leq \sigma\kappa_g\|x_{t+1}-x_{t'}\|.
\ee
In view of the fact that $[d+e]_+\leq|d|+|e|$, it follows that
\be\label{eq:c2}
\begin{array}{ll}
\displaystyle\sum_{i=1}^pb_i\leq \sum_{i=1}^p[|\lambda^{(i)}_t|+\sigma |G^{(i)}_{t',x_{t'}}(x_{t+1})|]\\[8pt]
\leq\sqrt{p}[\|\lambda_t\|+\sigma\|G_{t',x_{t'}}(x_{t+1})\|]\leq \sqrt{p}[\|\lambda_t\|+\sigma\nu_g].
\end{array}
\ee
From (\ref{eq:c1}), (\ref{eq:c2}) and the fact that $$a_i^2-b_i^2=(a_i-b_i)^2+2b_i(a_i-b_i),$$ it holds that
\be\label{eq:c4}
\begin{array}{ll}
\displaystyle\sum_{i=1}^p(a_i^2-b_i^2)=\sum_{i=1}^p[(a_i-b_i)^2+2b_i(a_i-b_i)]\\[6pt]
\displaystyle\leq p\sigma^2\kappa^2_g\|x_{t+1}-x_{t'}\|^2+2\sigma\kappa_g\|x_{t+1}-x_{t'}\|\sum_{i=1}^pb_i\\[8pt]
\leq p\sigma^2\kappa^2_g\|x_{t+1}-x_{t'}\|^2+2\sqrt{p}(\|\lambda_t\|+\sigma\nu_g)\sigma\kappa_g\|x_{t+1}-x_{t'}\|.
\end{array}
\ee

We now try to get a bound of $\|x_{t+1}-x_{t'}\|$.
Replacing $\widehat{x}$ with $x_{t'}$ in (\ref{eq:b3}), we have
\[
\begin{array}{ll}
\cL_{t',\sigma}(x_{t+1},\lambda_t)+\frac{\alpha}{2}\|x_{t+1}-x_{t'}\|^2\\[8pt]  \leq\cL_{t',\sigma}(x_{t'} ,\lambda_t)-\frac{\alpha}{2}\|x_{t'} -x_{t+1}\|^2.
\end{array}
\]
By using (\ref{eq:c3}) and the definition of the augmented Lagrangian function  in (\ref{eq:aug}), the above inequality can be rewritten as
\[
\alpha\|x_{t+1}-x_{t'}\|^2\leq f_{t'}(x_{t'})-F_{t,x_{t'}}(x_{t+1})+\frac{1}{2\sigma}\sum_{i=1}^p(a_i^2-b_i^2).\]
Then, it follows from  (\ref{eq:a1}) and (\ref{eq:c4}) that
\[
\begin{array}{ll}
\alpha\|x_{t+1}-x_{t'}\|^2\leq \kappa_f\|x_{t+1}-x_{t'}\|+\frac{p\kappa^2_g\sigma}{2}\|x_{t+1}-x_{t'}\|^2\\[8pt]\quad\quad+\sqrt{p}(\|\lambda_t\|+\sigma\nu_g)\kappa_g\|x_{t+1}-x_{t'}\|.
\end{array}
\]
 Dividing both sides by $\|x_{t+1}-x_{t'}\|$ and rearranging terms, we get the bound of $\|x_{t+1}-x_{t'}\|$ as follows,
\[
\|x_{t+1} -x_{t'}\|\leq\frac{2}{2\alpha-p\kappa_g^2\sigma}(\kappa_f+\sqrt{p}\kappa_g\|\lambda_t\|+\sqrt{p}\kappa_g\nu_g\sigma).
\]
Here, we have used the fact that $2\alpha-p\kappa_g^2\sigma>0$  which is from the conditions that $\alpha=\sqrt{T/(\tau+1)}, \sigma=\sqrt{(\tau+1)/T}$ and $T>(\tau+1)p\kappa_g^2$. In fact, it also holds that $2\alpha-p\kappa_g^2\sigma>\alpha$ and hence
\be\label{eq:c6}
\|x_{t+1} -x_{t'}\|\leq \frac{2}{\alpha}(\kappa_f+\sqrt{p}\kappa_g\|\lambda_t\|+\sqrt{p}\kappa_g\nu_g\sigma).
\ee
Recall that $\|\lambda_t\|\leq \psi(\sigma,\alpha,s)$ in Lemma \ref{lem:multiplier}, from (\ref{eq:psi}) we have
\be\label{eq:c7}
\begin{array}{ll}
\|\lambda_t\|\leq\kappa_0+(\tau+1)\kappa_1\frac{\alpha}{s}+\kappa_2\sigma+\kappa_3\sigma s\\[8pt]
\leq\kappa_0+\kappa_1+\kappa_2\sigma+2\kappa_3(\tau+1),
\end{array}
\ee
where we set $s$ to be an integer such that $\alpha/s\leq 1/(\tau+1)$ and $\sigma s\leq 2(\tau+1)$, i.e., $\sqrt{T(\tau+1)}\leq s\leq 2\sqrt{T(\tau+1)}$.

Next, we aim to analyze the constraint violation. It follows from (\ref{eq:alg2}) and (\ref{eq:a2}) that
\[
\begin{array}{ll}
\lambda_{t+1}^{(i)}=[\lambda^{(i)}_t+\sigma G_{t',x_{t'}}^{(i)}(x_{t+1})]_+\geq \lambda^{(i)}_t+\sigma G_{t',x_{t'}}^{(i)}(x_{t+1})\\[8pt]
\geq \lambda^{(i)}_t+\sigma [g_{t'}^{(i)}(x_{t'})-\kappa_g\|x_{t+1}-x_{t'}\|],
\end{array}
\]
which can be rewritten as
\[
g_{t'}^{(i)}(x_{t'})\leq\frac{1}{\sigma}(\lambda_{t+1}^{(i)}-\lambda_{t}^{(i)})+\kappa_g\|x_{t+1}-x_{t'}\|.
\]
Summing it for $t$ running from $\tau$ to $\tau+T-1$ and using $\lambda_{\tau}=0$, we obtain
\[
\sum_{t=\tau}^{\tau+T-1}g_{t'}^{(i)}(x_{t'})\leq \frac{\lambda_{\tau+T}^{(i)}}{\sigma}+\kappa_g\sum_{t=\tau}^{\tau+T-1}\|x_{t+1}-x_{t'}\|.
\]
Combining with $\lambda_{\tau+T}^{(i)}\leq\|\lambda_{\tau+T}\|$ and (\ref{eq:c6}), we have
\[
\begin{array}{ll}
\displaystyle\sum_{t=0}^{T-1}g^{(i)}_t(x_t)=\sum_{t=\tau}^{\tau+T-1}g_{t'}^{(i)}(x_{t'})\\[10pt]
\displaystyle\leq \frac{\|\lambda_{\tau+T}\|}{\sigma}+\frac{2\kappa_gT(\kappa_f+\sqrt{p}\kappa_g\|\lambda_t\|+\sqrt{p}\kappa_g\nu_g\sigma)}{\alpha}.
\end{array}
\]
Finally, using (\ref{eq:c7}), we get the claim.
\end{proof}

From Theorem \ref{th:constraint}, it follows that the constraint violation of Algorithm \ref{alg:MALM} for constrained OCO with feedback delays is bounded by
\[
\Vio^{(i)}(T)\leq\cO\left(T^{\frac{1}{2}}\tau^{\frac{3}{2}}\right).
\]
In contrast, in \cite{CZP2021} the constraint violation is bounded by
\[
\Vio^{(i)}(T)\leq\cO\left(T^{\frac{3}{4}}\tau^{\frac{1}{4}}\right).
\]
Apparently, when the delay is far less than the time horizon, i.e., $\tau\ll T$, our result is better than \cite{CZP2021}. Moreover, the assumption that $\lim_{T\rightarrow\infty}\frac{\tau(T)}{T}=0$ given in \cite{CZP2021} is not required in this paper. We also admit that the bound $\cO\left(T^{\frac{3}{4}}\tau^{\frac{1}{4}}\right)$ is better than $\cO\left(T^{\frac{1}{2}}\tau^{\frac{3}{2}}\right)$ when the delay $\tau$ is large enough such that $\tau\geq T^{1/5}$.

Finally, let us point out that during the whole discussion in  this section we are able to take $\tau=0$. In this case, it reduces to the setting of Section \ref{sec:nodelay}, and Theorem \ref{th:MALM} is derived by Theorem \ref{th:regret} and Theorem \ref{th:constraint}. However, it is of its own value to keep Section \ref{sec:nodelay} in the current form, since constrained OCO without delay is more common and popular in practice than delayed OCO.

\section{Numerical Experiments}\label{sec:numerical}
In this section, numerical results of the proposed algorithms are presented. We study three numerical examples: online network resource allocation, online logistic regression and online quadratically constrained quadratical program. The first two are examples of constrained OCO without feedback delay for testing Algorithm \ref{alg:MALM0}. The third one is an example of constrained OCO with delayed feedback for testing Algorithm \ref{alg:MALM}. Compared with several state-of-the-art algorithms, the proposed algorithms are demonstrated to be effective.
\subsection{Online Network Resource Allocation}\label{subsec:nra}
In this subsection, we consider the following online network resource allocation problem. Given a cloud network represented by a directed graph $\cG=(\cI,\cE)$ with node set $\cI$ and edge set $\cE$, where $|\cI|=I$ and $|\cE|=E$. The node  set $\cI=\cJ\bigcup\cK$ contains mapping nodes collected in the set $\cJ=\{1,\ldots,J\}$ and data centers collected in the set $\cK=\{1,\ldots,K\}$. At time $t$, each mapping node $j$ receives an exogenous data request $b_t^j$ and forwards the amount $z_t^{jk}$ to each data center $k$. Each data center $k$ schedules workload $y_t^k$, which can be viewed as the weight of a virtual outgoing edge $(k,\ast)$. So, the edge set $\cE=\{(j,k),\forall j\in\cJ, k\in\cK\}\bigcup\{(k,\ast),\forall k\in\cK\}$ includes all the edges from mapping nodes to data centers, and all virtual outgoing edges of the data centers.

The online network resource allocation problem can be regarded as an example of constrained OCO (without feedback delay) with
\[
\begin{array}{ll}
\displaystyle f_t(x_t):=\sum_{j\in\cJ}\sum_{k\in\cK}c^{jk}(z_t^{jk})^2+\sum_{k\in\cK}p_t^k(y_t^k)^2,\\[15pt]
g_t(x_t):=Ax_t+b_t,
\end{array}
\]
where, $x_t:=[z_t^{11},z_t^{21},\ldots,z_t^{JK},y_t^1,\ldots,y_t^K]^T\in\R^{E}$ denotes the resource allocation vector, $c^{jk}$ is the bandwidth cost for transmitting from mapping node $j$ to data center $k$, $p_t^k$ is the energy price at data center $k$, $b_t:=[b_t^1,\ldots,b_t^J,0,\ldots,0]^T\in\R^{I}$, and the $I\times E$ node-incidence matrix $A$ is formed with the $(i,e)$-th entry
\[
A(i,e)=\left\{
\begin{array}{ll}
1,\quad &\mbox{if edge } e \mbox{ enters node } i,\\[3pt]
-1,\quad &\mbox{if edge } e \mbox{ leaves node } i,\\[3pt]
0,\quad &\mbox{else}.
\end{array}
\right.
\]
The constraint $Ax_t+b_t\leq 0$ represents the workload flow conservation.
In addition, the set $\cC:=\{0\leq x\leq\bar{x}\}$, where $\bar{x}:=[\bar{z}_t^{11},\bar{z}_t^{21},\ldots,\bar{z}_t^{JK},\bar{y}_t^1,\ldots,\bar{y}_t^K]^T$ contains the bandwidth limit $\bar{z}_t^{jk}$ of edge $(j,k)$ and the resource capability $\bar{y}_t^k$ of data center $k$.

We shall test the following numerical case provided in \cite{CLG2017}. Consider the allocation task with   $J=10$ and $K=10$. The bandwidth limit $\bar{z}_t^{jk}$ is uniformly randomly generated within $[10,100]$, and the resource capability $\bar{y}_t^k$ is uniformly randomly generated from $[100,200]$. The bandwidth cost is set as $c^{jk}=40/\bar{z}_t^{jk}$. The energy price is set as $p_t^k=\sin(\pi t/12)+n_t^k$ with noise $n_t^k$ uniformly distributed over $[1,3]$, and the data request is set as $b_t^j=50\sin(\pi t/12)+v_t^j$ with noise $v_t^j$ uniformly distributed over $[99,101]$. Finally, we set the time horizon $T=10000$.

We apply Algorithm \ref{alg:MALM0} (denoted by MALM) for solving this numerical problem, in which we use \textit{Plain Model} to form (\ref{eq:approx}) and the associated algorithm. The stepsizes in Algorithm \ref{alg:MALM0} are set to $\alpha=0.1\sqrt{T}$ and $\sigma=100/\sqrt{T}$. The initial decision is set to $x_0=0$. The performance is measured by the time average regrets $\frac{\Reg(t)}{t}$ and the time average constraint violation $\max_i\frac{\Vio^{(i)}(t)}{t}$, where $\Reg(t)$ and $\Vio^{(i)}(t)$ are defined in (\ref{eq:regret}), and the best fixed decision $x^*$ is computed by the MATLAB built-in function \texttt{quadprog} for solving the corresponding problem (\ref{eq:xstar}).

In this experiment, we intend to compare the performance of MALM with the following well-studied algorithms:
\begin{itemize}
\item MOSP. The modified online saddle-point (MOSP) algorithm proposed in \cite{CLG2017} is in the following form:
\[\left\{
\begin{array}{ll}
\displaystyle x_{t+1}=\argmin_{x\in\cC}\nabla f_t(x_t)^Tx+\lambda_t^Tg_t(x)+\frac{\|x-x_t\|^2}{2\alpha},\\[10pt]
\lambda_{t+1}=[\lambda_t+\mu g_t(x_{t+1})]_+.
\end{array}
\right.
\]
Since $g_t$ is linear in this experiment, we have the following equivalent reformulation of MOSP:
\[\left\{
\begin{array}{ll}
\displaystyle x_{t+1}=\Pi_{\cC}[x_t-\alpha(\nabla f_t(x_t)+J_t(x_t)\lambda_t)],\\[10pt]
\lambda_{t+1}=[\lambda_t+\mu g_t(x_{t+1})]_+,
\end{array}
\right.
\]
where $J_t(x_t)=A^T$ is the Jacobian matrix of $g_t$ at $x_t$.
We set the parameters to $\alpha=\mu=T^{-1/3}$.
\item CL. The algorithm (simply denoted by CL) proposed in \cite{CLiu2019} is formed as
\[\left\{
\begin{array}{ll}
\displaystyle x_{t+1}=\Pi_{\cC}[x_t-\eta(\nabla f_t(x_t)+J_t(x_t)\lambda_t)],\\[10pt]
\lambda_{t+1}=[\lambda_t+\eta(g_t(x_{t})-\delta\eta\lambda_t)]_+.
\end{array}
\right.
\]
We set the parameters to $\eta=2T^{-1/2}$ and $\delta=0.01$.
\item NY. The algorithm (simply denoted by NY) proposed in \cite{NYu2017} can be equivalently written as
\[\left\{
\begin{array}{ll}
\displaystyle x_{t+1}=\Pi_{\cC}\left[x_t-\frac{\nu\nabla f_t(x_t)+J_t(x_t)\lambda_t}{2\alpha}\right],\\[10pt]
\lambda_{t+1}=[\lambda_t+g_t(x_{t+1})]_+.
\end{array}
\right.
\]
We set the parameters to $\alpha=T$ and $\nu=T^{1/2}$.

\end{itemize}

We can observe that all of these three algorithms are of very similar formulations. All of them belong to the primal-dual gradient method associated with a type of Lagrangian function. For example, in CL the new action $x_{t+1}$ can be rewritten as a primal gradient descent step as
\[
x_{t+1}=\Pi_{\cC}(x_t-\eta\nabla_x\cL(x_t,\lambda_t)),
\]
and $\lambda_{t+1}$ can be performed as a dual gradient ascent step as
\[
\lambda_{t+1} =\Pi_{\R_+^p}(\lambda_t+\eta\nabla_{\lambda}\cL(x_t,\lambda_t)),
\]
where $\R_+^p:=\{x\in\R^p:x\geq 0\}$ and $\cL$ is a modified Lagrangian defined by
\[
\cL(x,\lambda) = f_t(x)+\lambda^Tg_t(x)-\frac{\delta}{2}\|\lambda\|^2.
\]

The numerical results are shown in Fig. \ref{fig:nra}, in which we can see the performances of the four tested algorithms. We observe that, MOSP, CL and YN perform very well for this online network resource allocation problem, and the performance of MALM is at least comparable. Let us point out that, this online network resource allocation problem is actually very simple (with quadratical loss $f_t$ and linear constraint $g_t$), thus the advantage of MALM is not obvious compared with these primal-dual gradient methods. In addition, it is shown in Fig. \ref{nra-regret} that the time average regrets all converges to negative values, i.e., $\frac{1}{T}\sum_{t=0}^{T-1}f_t(x_t)<\frac{1}{T}\sum_{t=0}^{T-1}f_t(x^*)$. This is possible in theory, since $x^*$ is only the fixed best decision, not the dynamic best decision $x_t^*$ defined in (\ref{eq:dynamic-decision}).

\begin{figure}[!ht]
\centering
\subfloat[]{\includegraphics[width=2.9in]{./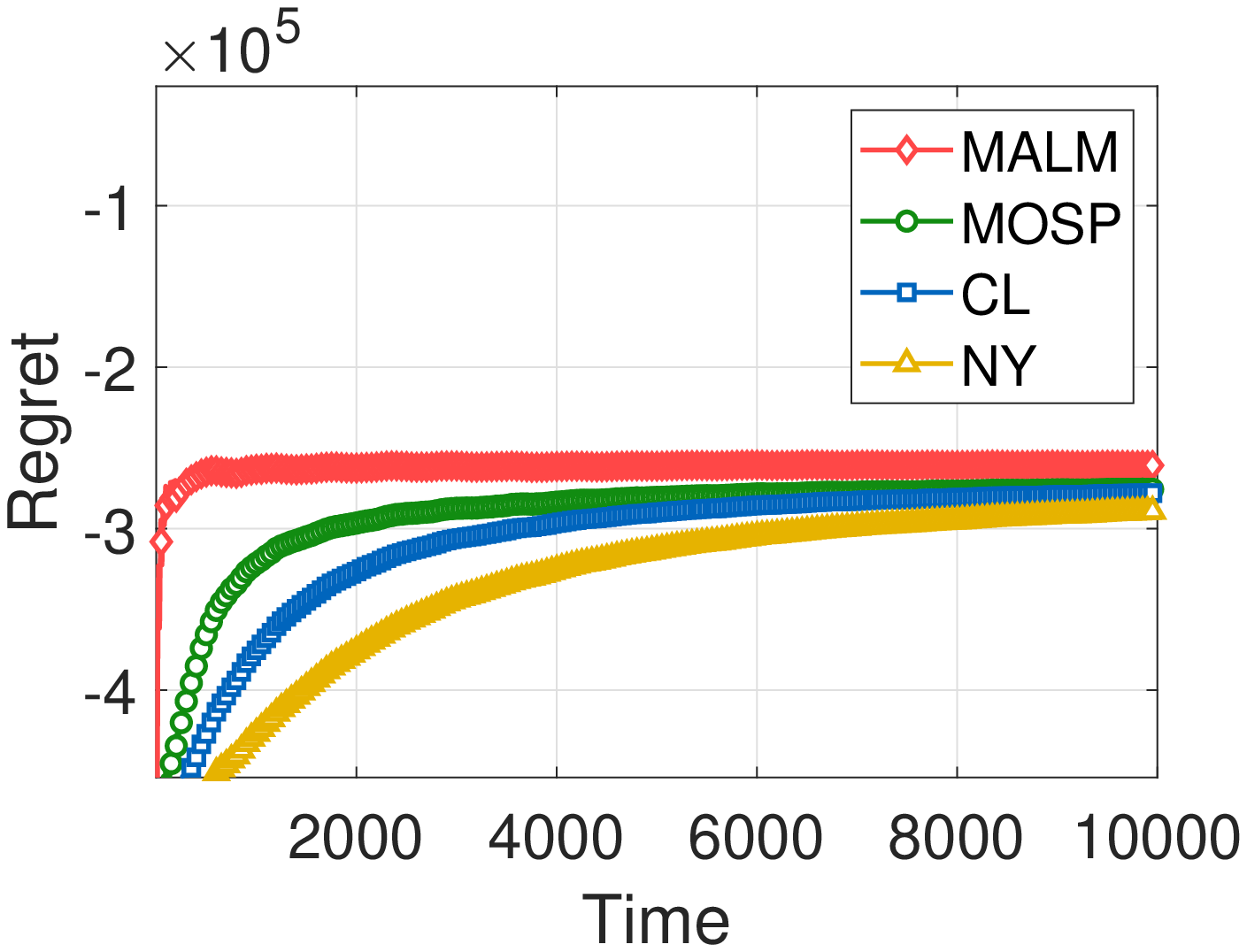}%
\label{nra-regret}}
\\
\subfloat[]{\includegraphics[width=2.9in]{./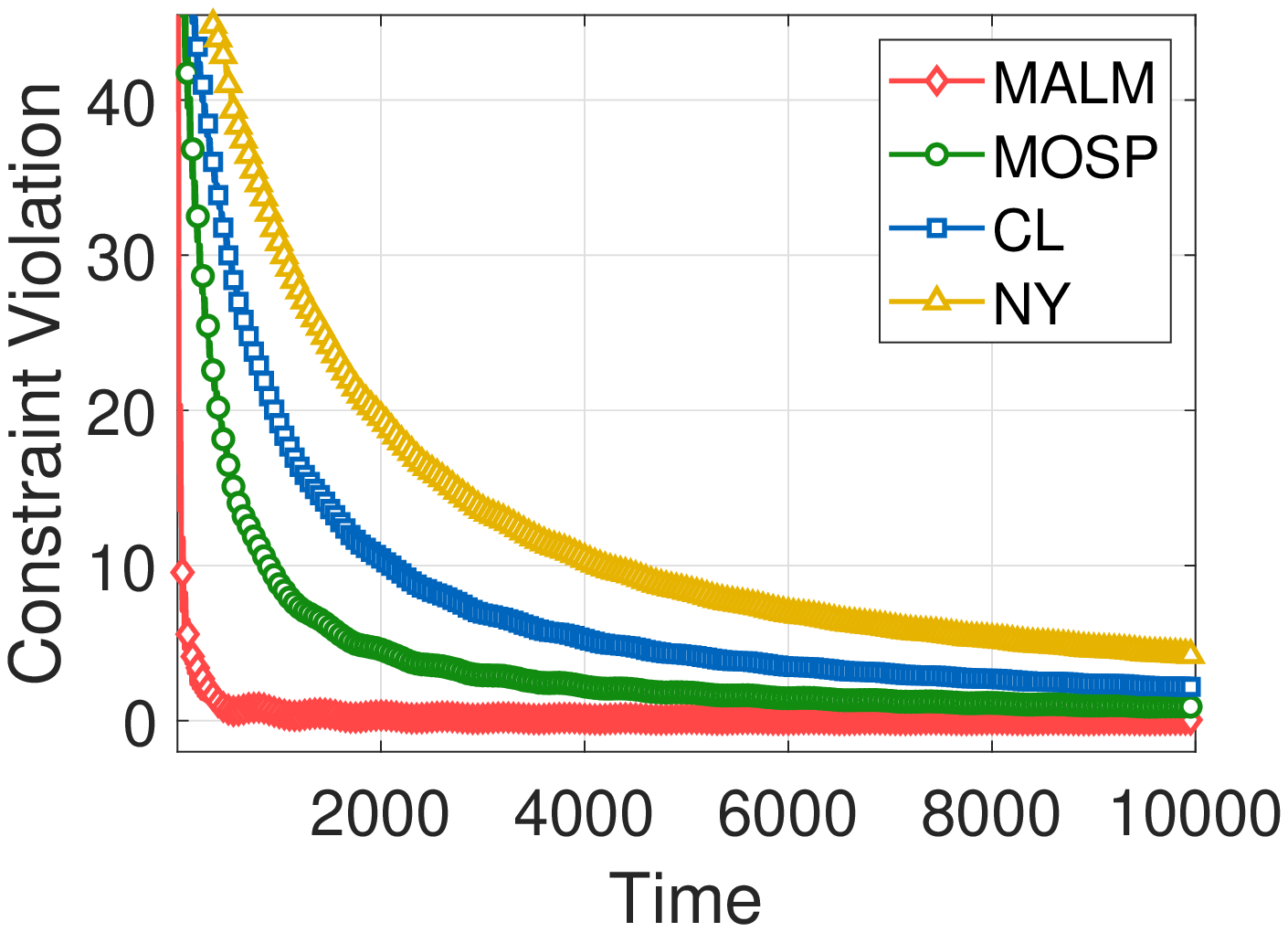}%
\label{nra-vio}}
\caption{Comparison of algorithms with respect to time average regrets and time average constraint violation for online network resource allocation. (a) Time average regrets. (b) Time average constraint violations.}
\label{fig:nra}
\end{figure}

\subsection{Online Logistic Regression}\label{subsec:olg}
In this subsection, we study the following online logistic regression problem, which is an example of constrained OCO
(without feedback delay) with
\[
\begin{array}{ll}
\displaystyle f_t(x):=\sum_{i=1}^k\log(1+\exp(-l_{i,t}u_{i,t}^Tx)),\\[15pt]
g_t(x):=\|x\|_1-a_t,
\end{array}
\]
where $u_{i,t}\in\R^n$ is the $i$-th training data at time $t$, $l_{i,t}\in\{-1,1\}$ is the corresponding label, and $a_t$ is a threshold on $\|x\|_1$ to guarantee sparsity. The set $\cC:=\{x\in\R^n:\|x\|_{\infty}\leq M\}$.

We will test the following numerical case provided in \cite{CLiu2019}. The data is generated recursively as $u_{i,t+1}=u_{i,t}+\beta_{i,t}$, where $\beta_{i,t}\in\R^n$ is uniformly distributed over $\left[-\frac{1}{2t},\frac{1}{2t}\right]$. The label $l_{i,t}$ is uniformly randomly chosen from $\{-1,1\}$. The threshold is generated as $a_{t+1}=[a_t+\zeta_t]_+$, where $\zeta_{t}\in\R$ is uniformly distributed over $\left[-\frac{1}{2t},\frac{1}{2t}\right]$. We also set $n=5$, $k=10$, $T=5000$.

The performance of the proposed algorithm is also measured by the time average regrets $\frac{\Reg(t)}{t}$ and the time average constraint violation $\frac{\Vio(t)}{t}$, where $\Reg(t)$ and $\Vio(t)$ are defined in (\ref{eq:regret}), and the best fixed decision $x^*$ is computed by CVX\cite{cvx} for solving the corresponding problem (\ref{eq:xstar}).

We apply Algorithm \ref{alg:MALM0} (also denoted by MALM) for solving this numerical problem, in which we use \textit{Linearized Model}. In this setting, the corresponding subproblem (\ref{eq:MALM-1}) is rewritten as
\[
x_{t+1}=\argmin_{x\in\cC}\left\{\frac{\alpha}{2}\|x\|^2+a_t^Tx+\frac{\sigma}{2}[b_t^Tx+\gamma_t]_+^2\right\},
\]
where $\alpha,\sigma$ are stepsizes given in Algorithm \ref{alg:MALM0},  $a_t, b_t, \gamma_t$ are denoted by
\[
a_t:=-\alpha x_t+u_t,\quad b_t:=v_t,\quad \gamma_t:=\frac{\lambda_t}{\sigma}+g_t(x_t)-v_t^Tx_t.
\]
Here, $u_t$ and $v_t$ are (sub)gradients of $f_t$ and $g_t$ at $x_t$, respectively. By a simple calculation, we derive the closed-form solution $x_{t+1}=\Pi_{\cC}(\bar{x}_t)$, where $\bar{x}_t$ is computed by
\[
\bar{x}_t=\left\{
\begin{array}{ll}
-\frac{a_t}{\alpha},\quad &\mbox{if }\alpha\gamma_t\leq a_t^Tb_t,\\[8pt]
-\frac{1}{\alpha}(a_t+\frac{\sigma\gamma_t}{2}b_t)+\frac{\sigma b_tb_t^T}{2\alpha^2+\alpha\sigma b^Tb},\quad &\mbox{if }\alpha\gamma_t> a_t^Tb_t.
\end{array}
\right.
\]
We also set $\alpha=10\sqrt{T}$, $\sigma=10/\sqrt{T}$.

In this experiment, we  compare the performance of MALM with CL and NY, which are  described previously. The time average regrets and time average constraint violations are shown in Fig. \ref{olr-regret} and Fig. \ref{olr-vio}, respectively. In both two figures, we can observe that, the performances of CL and NY are very close, and MALM is apparently superior.

\begin{figure}[!ht]
\centering
\subfloat[]{\includegraphics[width=2.9in]{./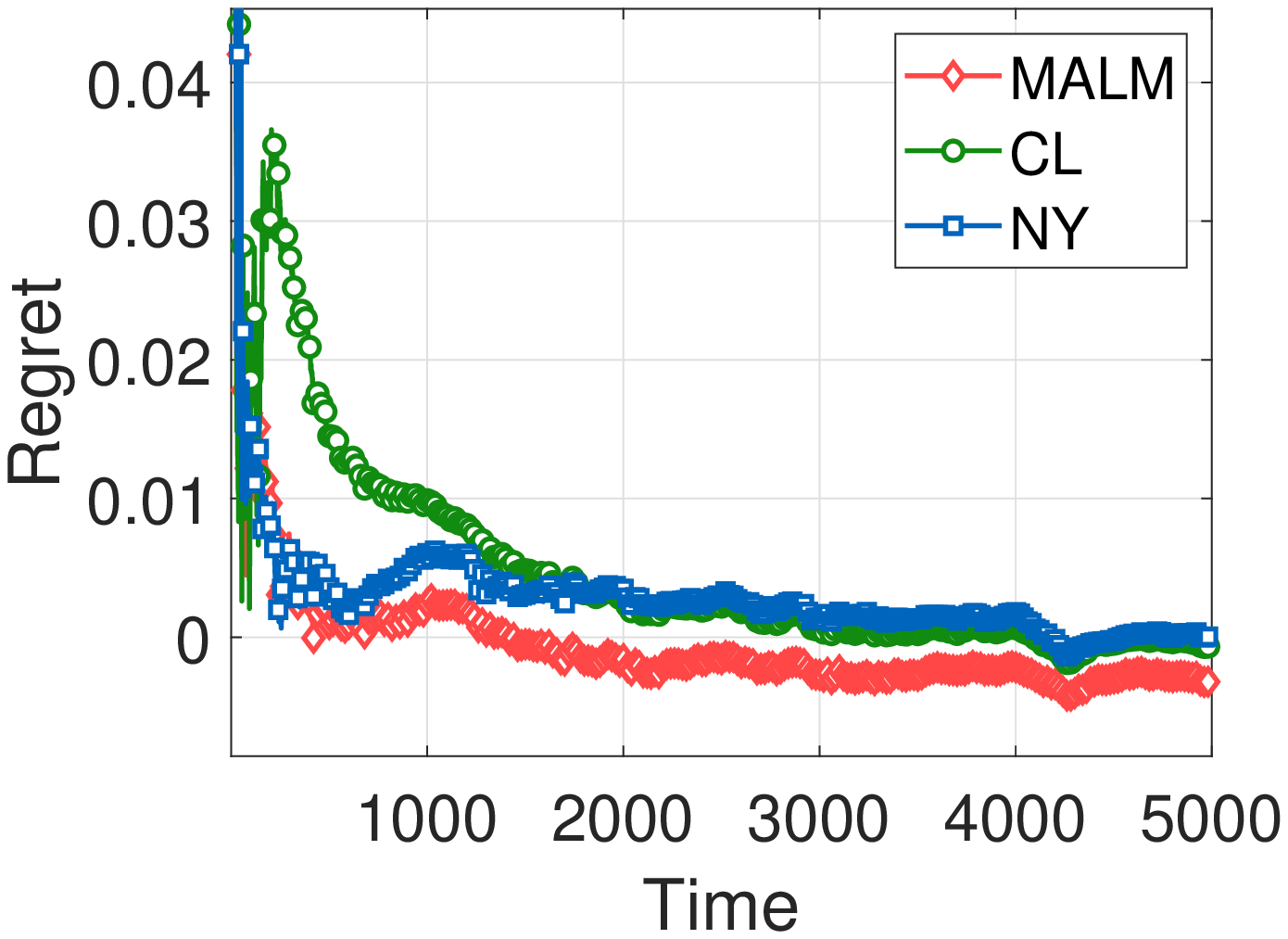}%
\label{olr-regret}}
\\
\subfloat[]{\includegraphics[width=2.9in]{./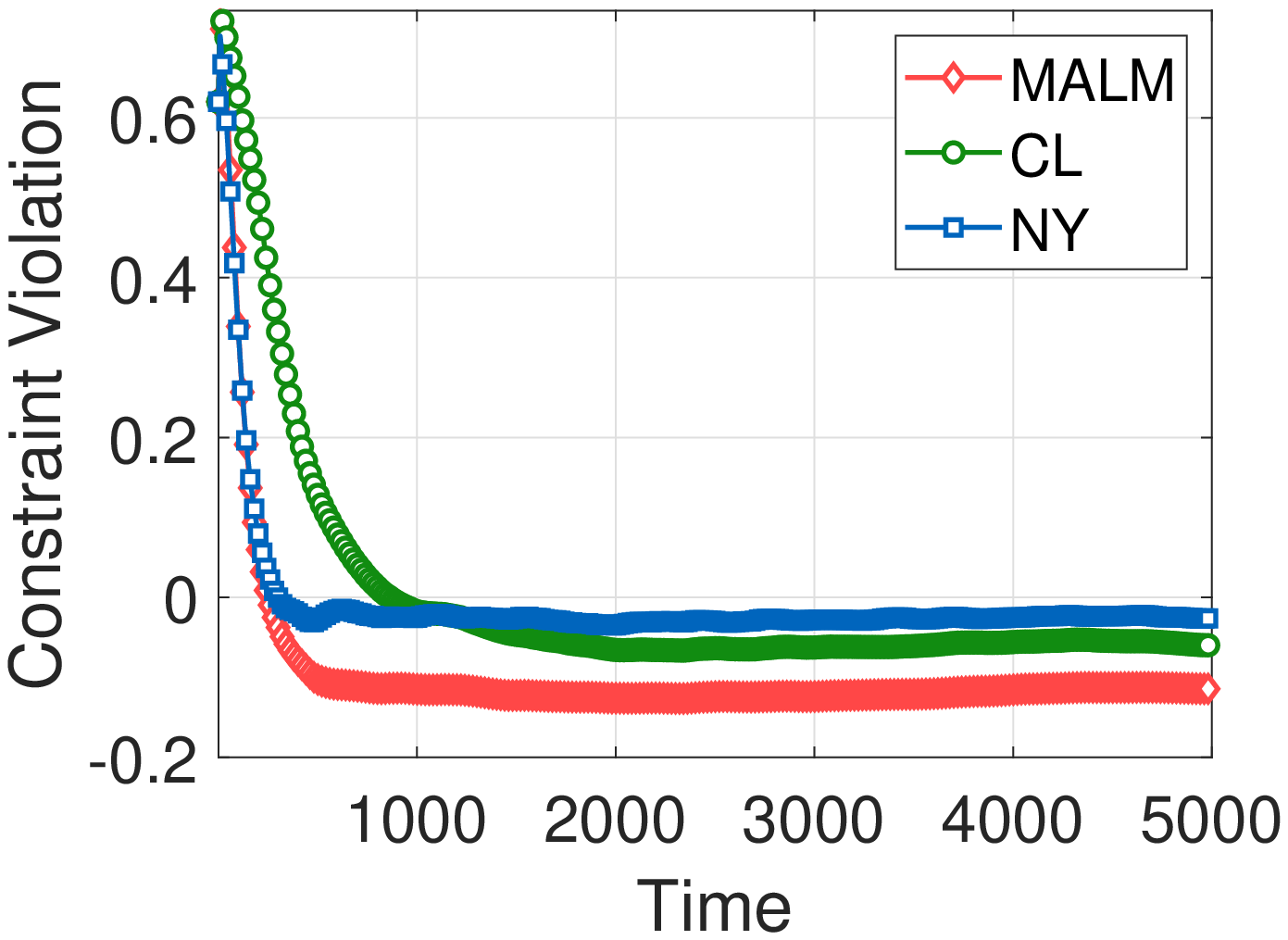}%
\label{olr-vio}}
\caption{Comparison of algorithms with respect to time average regrets and time average constraint violation for online logistic regression. (a) Time average regrets. (b) Time average constraint violations.}
\label{fig:olr}
\end{figure}


\subsection{Online Quadratically Constrained Quadratical Program}
In this subsection, we consider the online quadratically constrained quadratical program (OQCQP), which is also an instance of constrained OCO  with
\[
\begin{array}{ll}
\displaystyle f_t(x_t):=\frac{1}{2}x^TA_tx+b_t^Tx,\\[10pt]
\displaystyle g_t^{(i)}(x):=\frac{1}{2}x^TC_t^{(i)}x+(d_t^{(i)})^Tx+e_t^{(i)},\ i=1,\ldots,p,
\end{array}
\]
where $A_t\in\cS_{+}^n$, $b_t\in\R^n$, $C_t^{(i)}\in\cS_{+}^n$, $d_t^{(i)}\in\R^n$ and $e_t^{(i)}\in\R$. Here, $\cS_{+}^n$ denotes the set of all $n\times n$ positive definite matrices. The set $\cC:=\{x\in\R^n:\|x\|\leq R\}$. OQCQP arises in various applications, such as signal processing,  power generation \cite{YLYXCJ2021}, optimal power flow \cite{BDS2019}, etc. Due to communication delays or environment reaction time, the data or user feedback are usually revealed to the player with delays. In this experiment, we assume that $\tau\geq 0$ is the feedback delay and the information of $\{A_t, b_t, C_t^{(i)},d_t^{(i)},e_t^{(i)}\}$ is available at time slot $t+\tau$ after the decision $x_{t+\tau}$ is submitted. Apparently, $\tau=0$ means no delay.

The following test case of OQCQP is constructed in \cite{CZP2021}. Let $A_1$ be the identity matrix. We generate $A_{t+1}$ as follows: let $\tilde{A}_t=A_t+\Delta_t$ where $\Delta_t$ is a symmetric matrix and its entry is uniformly distributed over $[-0.1,0.1]$; then, let $A_{t+1}=\Pi_{\cS_+^n}(\tilde{A}_t)$ be the projection of $\tilde{A}_t$ onto $\cS_+^n$ such that $A_{t+1}$ is positive semidefinite. The matrices $C_t^{(i)}$, $i=1\ldots,p$ are generated similarly. Let $b_1$ be uniformly distributed from $[-1,1]$. Then, $b_{t+1}=b_t+w_t$ is recursively generated, where $w_t$ is uniformly distributed over $[-0.1,0.1]$. The vectors $d_t^{(i)}$, $i=1,\ldots,p$ are generated in the same way. Finally, each $e_t^{(i)}$ is generated with a particular purpose. After $C_t^{(i)}$ and $d_t^{(i)}$ are generated, we let $h_t^{(i)}$ be uniformly distributed over $[0,1]$ and generate a decision  $\widehat{x}$ with its  entry $\widehat{x}_j$ being uniformly distributed from
$\left(-\frac{R}{\sqrt{n}},\frac{R}{\sqrt{n}}\right)$, then we compute $e_t^{(i)}=-(\frac{1}{2})\widehat{x}^TC_t^{(i)}\widehat{x}+(d_t^{(i)})^T\widehat{x}+h_t^{(i)}$. It is not difficult to verify that in this setting $\widehat{x}$ satisfies the Slater condition (Assumption \ref{assu:slater}). We also set $n=8$, $p=3$, $R=10$, $T=1000$.

We will use Algorithm \ref{alg:MALM} (also denoted by MALM) for solving this numerical case of OQCOP, which is benchmarked by the following two algorithms:
\begin{itemize}
\item CZP. The algorithm (simply denoted by CZP) proposed in \cite{CZP2021} is formed as
\[\left\{
\begin{array}{ll}
\displaystyle x_{t+1}=\Pi_{\cC}[x_t-\eta(\nabla f_{t-\tau}(x_{t-\tau})+J_{t-\tau}(x_{t-\tau})\lambda_{t-\tau})],\\[10pt]
\lambda_{t+1}=[\lambda_t+\eta(g_{t-\tau}(x_{t-\tau})-\delta\eta\lambda_{t-\tau})]_+.
\end{array}
\right.
\]
We set the parameters as $\eta=(\tau T)^{-1/2}$, $\delta=10$ if $\tau\geq 1$; $\eta=T^{-1/2}$, $\delta=10$ if $\tau=0$.
Let us remark that CZP is a simple extension of CL (proposed in \cite{CLiu2019}) for constrained OCO with feedback delays.
\item NY. Inspired by CZP, we can easily extend  algorithm NY proposed in \cite{NYu2017} for solving constrained OCO with feedback delays as follows
\[\left\{
\begin{array}{ll}
\displaystyle x_{t+1}=\Pi_{\cC}\left[x_t-\frac{\nu\nabla f_{t-\tau}(x_{t-\tau})+J_{t-\tau}(x_{t-\tau})\lambda_{t-\tau}}{2\alpha}\right],\\[10pt]
\lambda_{t+1}=[\lambda_t+g_{t-\tau}+J_{t-\tau}(x_{t-\tau})'(x_{t+1}-x_{t-\tau})]_+.
\end{array}
\right.
\]
We denote this algorithm by NY again.
The parameters are set as $\alpha=\tau T$, $\nu=(\tau T)^{1/2}$ if $\tau\geq 1$; $\alpha=T$, $\nu=T^{1/2}$ if $\tau=0$.
\end{itemize}

The numerical results are illustrated in Fig. \ref{fig:oqcqp0}-\ref{fig:oqcqp100} with delays $\tau=0$ (no delay), $\tau=10$, $\tau=20$, $\tau=50$ and $\tau=100$, respectively. From Fig. \ref{fig:oqcqp0} (no delay), we can see the time average regret of MALM is better than CZP and NY, with constraints being not violated during the whole time. From Fig. \ref{fig:oqcqp10}-\ref{fig:oqcqp100} with different delays, we observe that MALM always performs better than the other two algorithms. More interestingly, in theory the bound of constraint violation of CZP is possibly better than that of MALM when the delay is large enough. However, it is shown from Fig. \ref{fig:oqcqp100} with $\tau=100$ that the time average constraint violation of MALM is not worse than CZP. Therefore, it is possible that the theoretical bound of constraint violation with MALM can be further improved.

\begin{figure}[!ht]
\centering
\subfloat[]{\includegraphics[width=2.9in]{./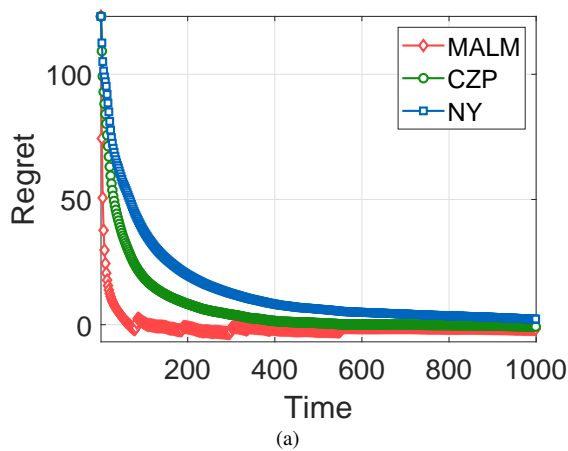}%
\label{oqcqp0-regret}}
\\
\subfloat[]{\includegraphics[width=2.9in]{./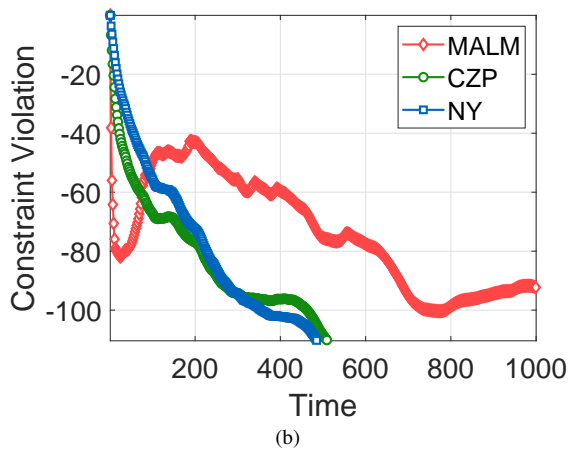}%
\label{oqcqp0-vio}}
\caption{Comparison of algorithms with respect to time average regrets and time average constraint violation for OQCQP with $\tau=0$ (no delay). (a) Time average regrets. (b) Time average constraint violations.}
\label{fig:oqcqp0}
\end{figure}

\begin{figure}[!ht]
\centering
\subfloat[]{\includegraphics[width=2.9in]{./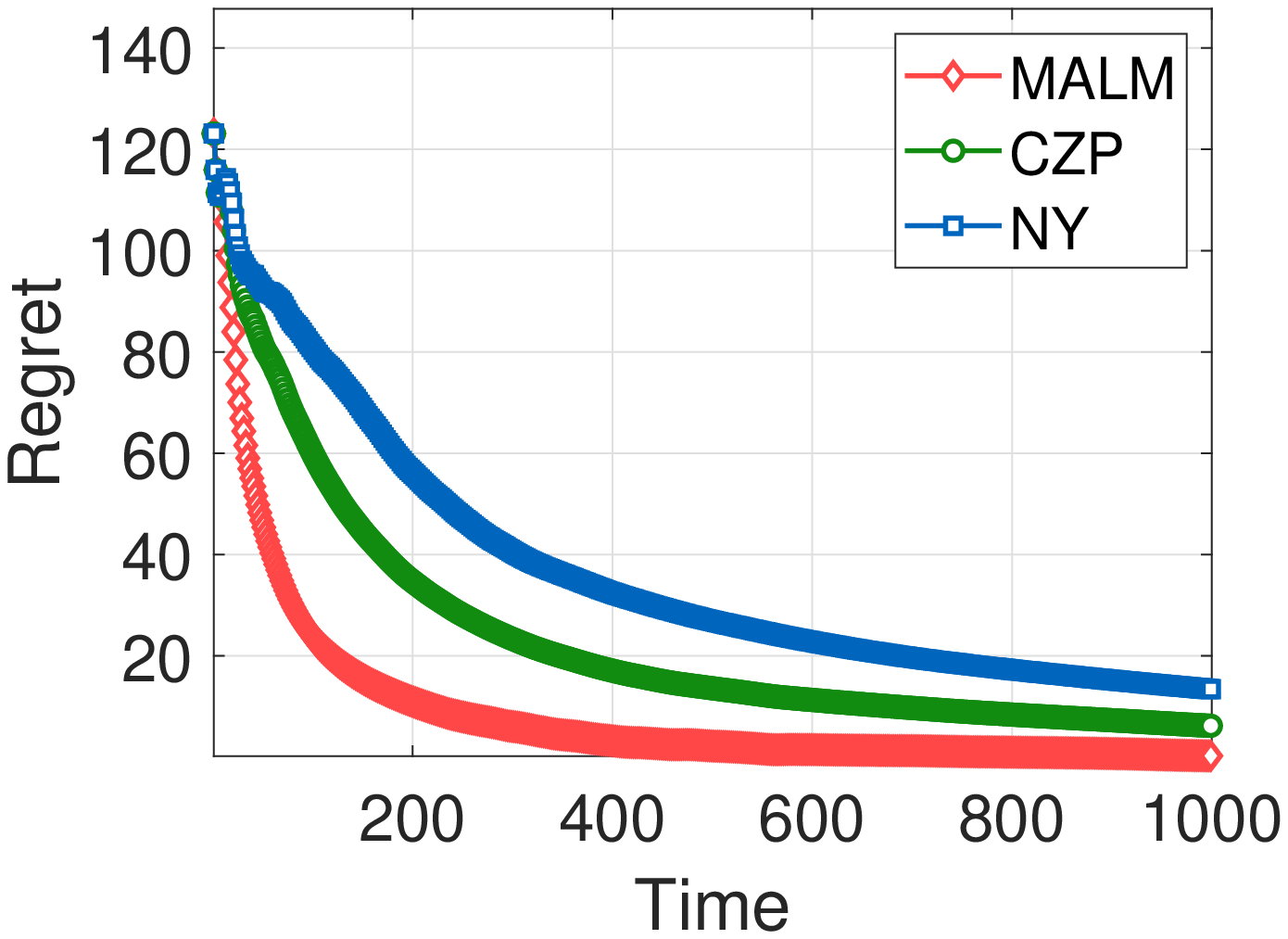}%
\label{oqcqp10-regret}}
\\
\subfloat[]{\includegraphics[width=2.9in]{./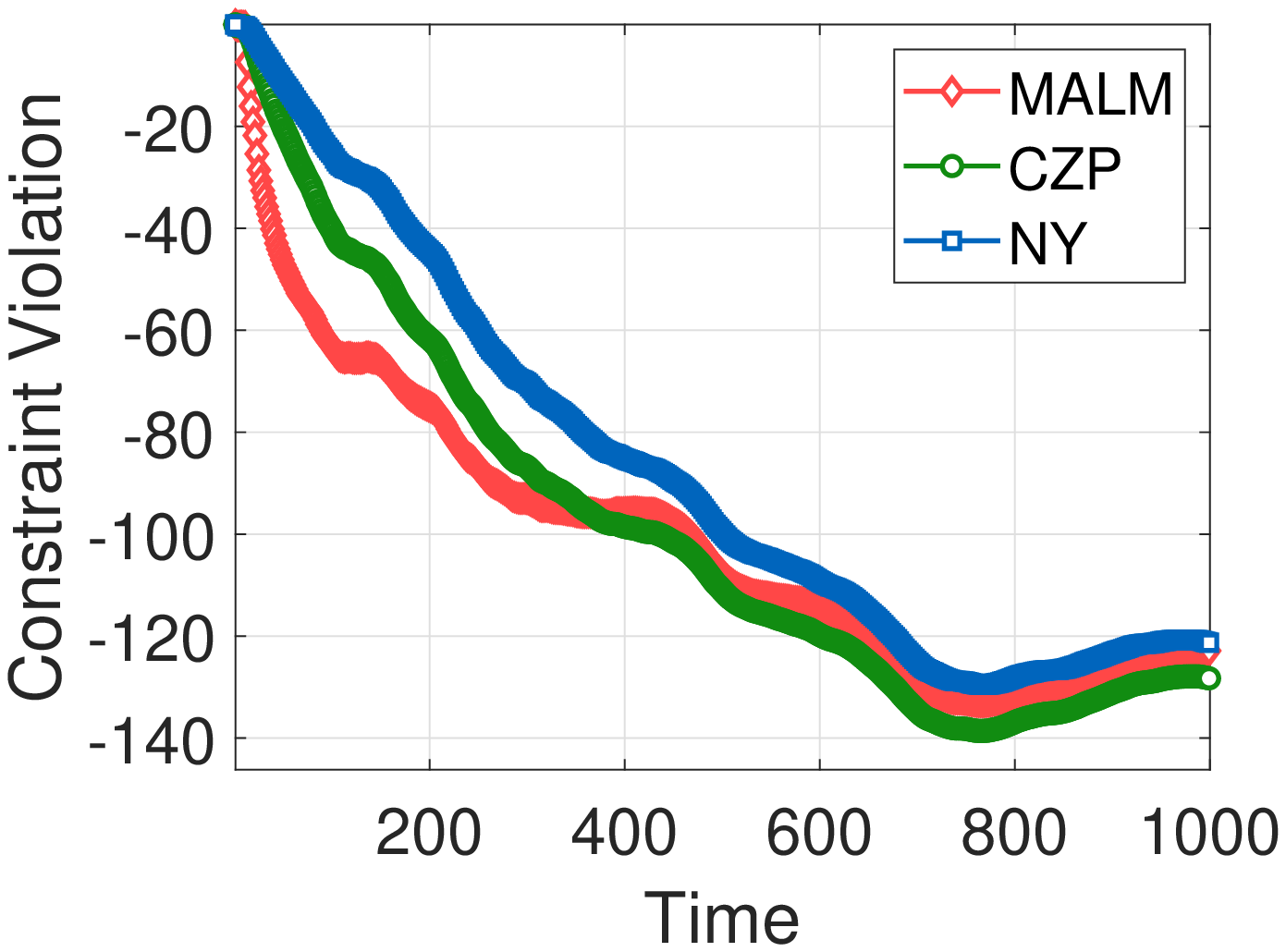}%
\label{oqcqp10-vio}}
\caption{Comparison of algorithms with respect to time average regrets and time average constraint violation for OQCQP with delay $\tau=10$. (a) Time average regrets. (b) Time average constraint violations.}
\label{fig:oqcqp10}
\end{figure}

\begin{figure}[!ht]
\centering
\subfloat[]{\includegraphics[width=2.9in]{./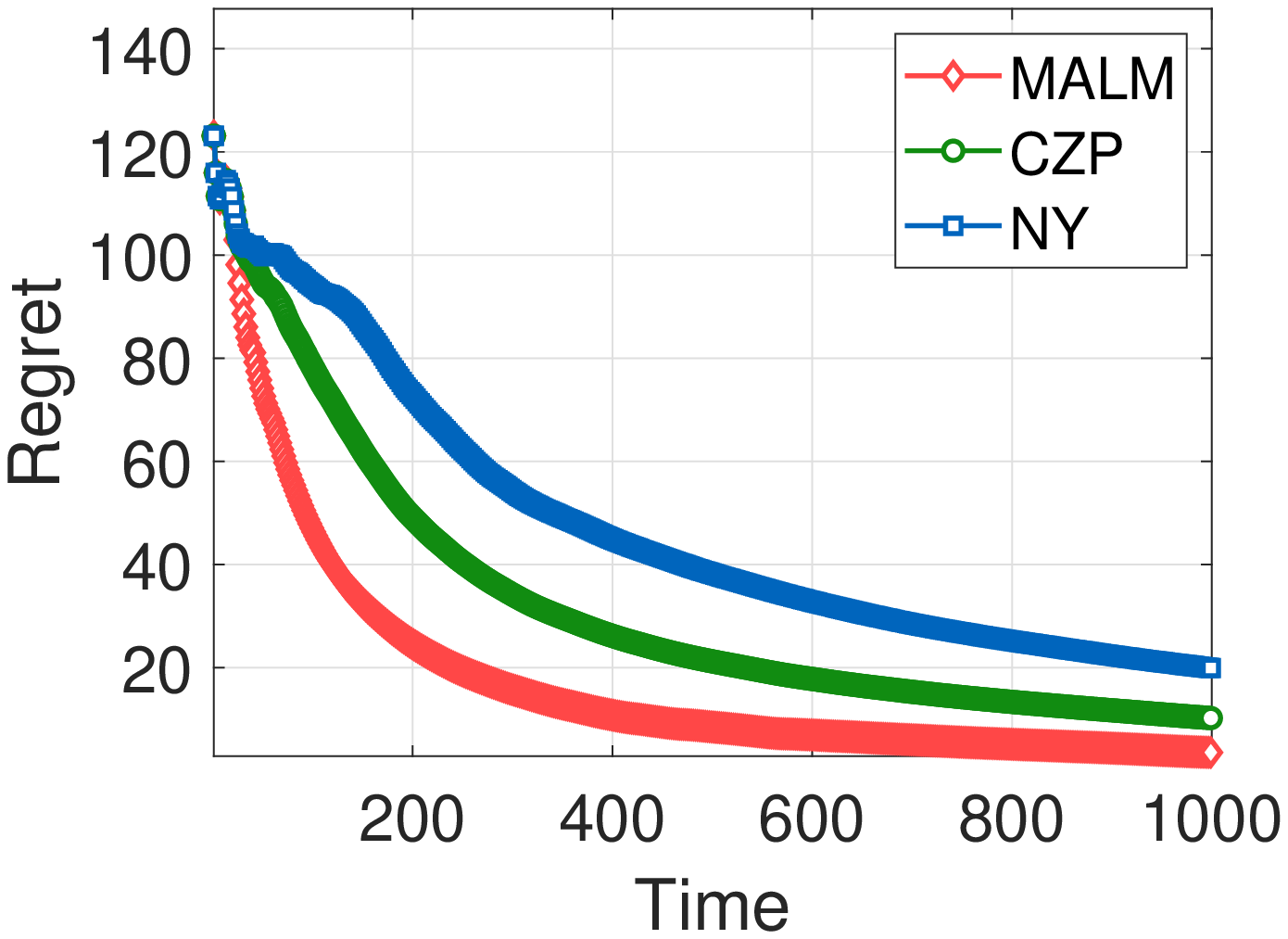}%
\label{oqcqp20-regret}}
\\
\subfloat[]{\includegraphics[width=2.9in]{./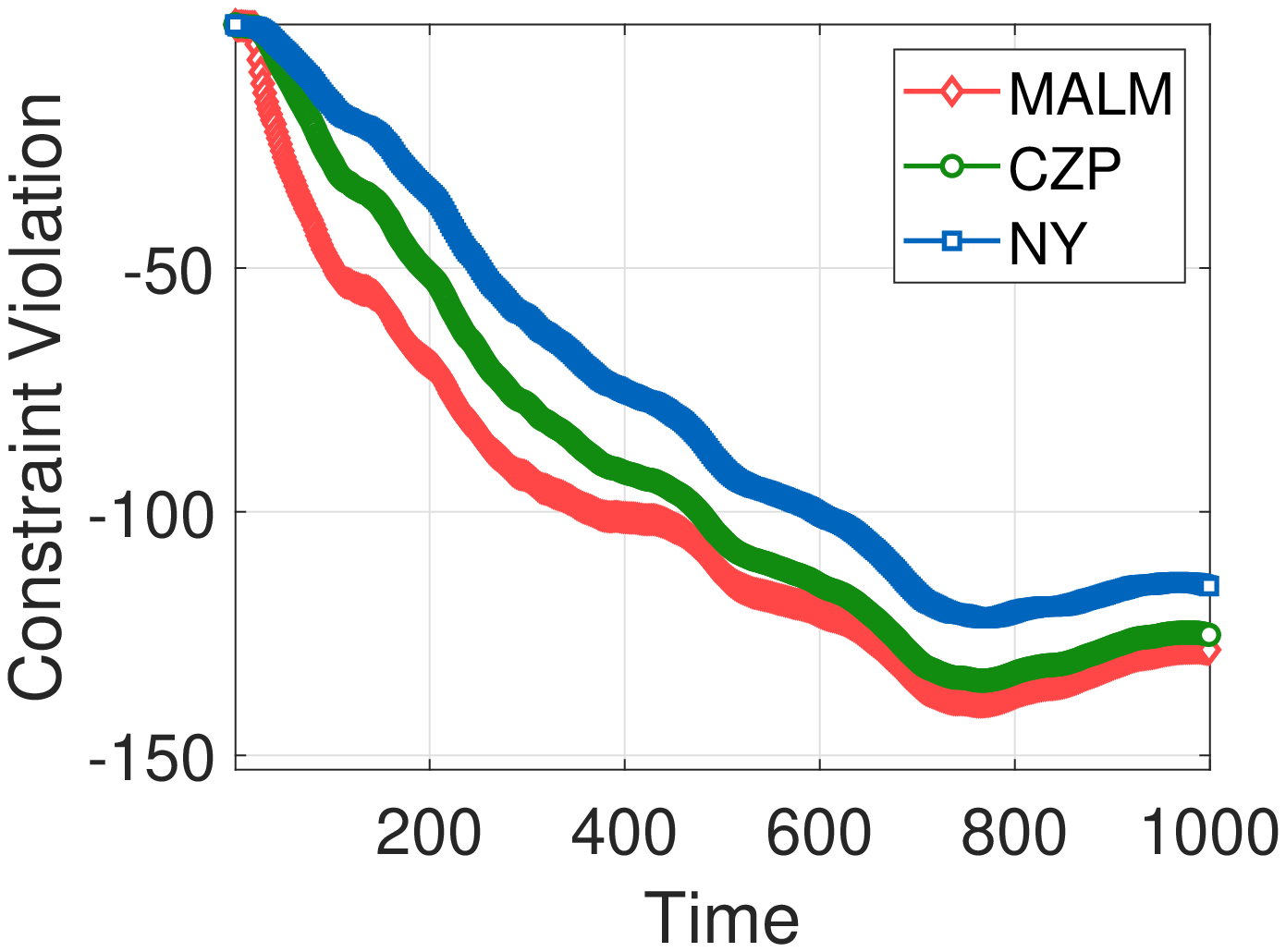}%
\label{oqcqp20-vio}}
\caption{Comparison of algorithms with respect to time average regrets and time average constraint violation for OQCQP with delay $\tau=20$. (a) Time average regrets. (b) Time average constraint violations.}
\label{fig:oqcqp20}
\end{figure}

\begin{figure}[!ht]
\centering
\subfloat[]{\includegraphics[width=2.9in]{./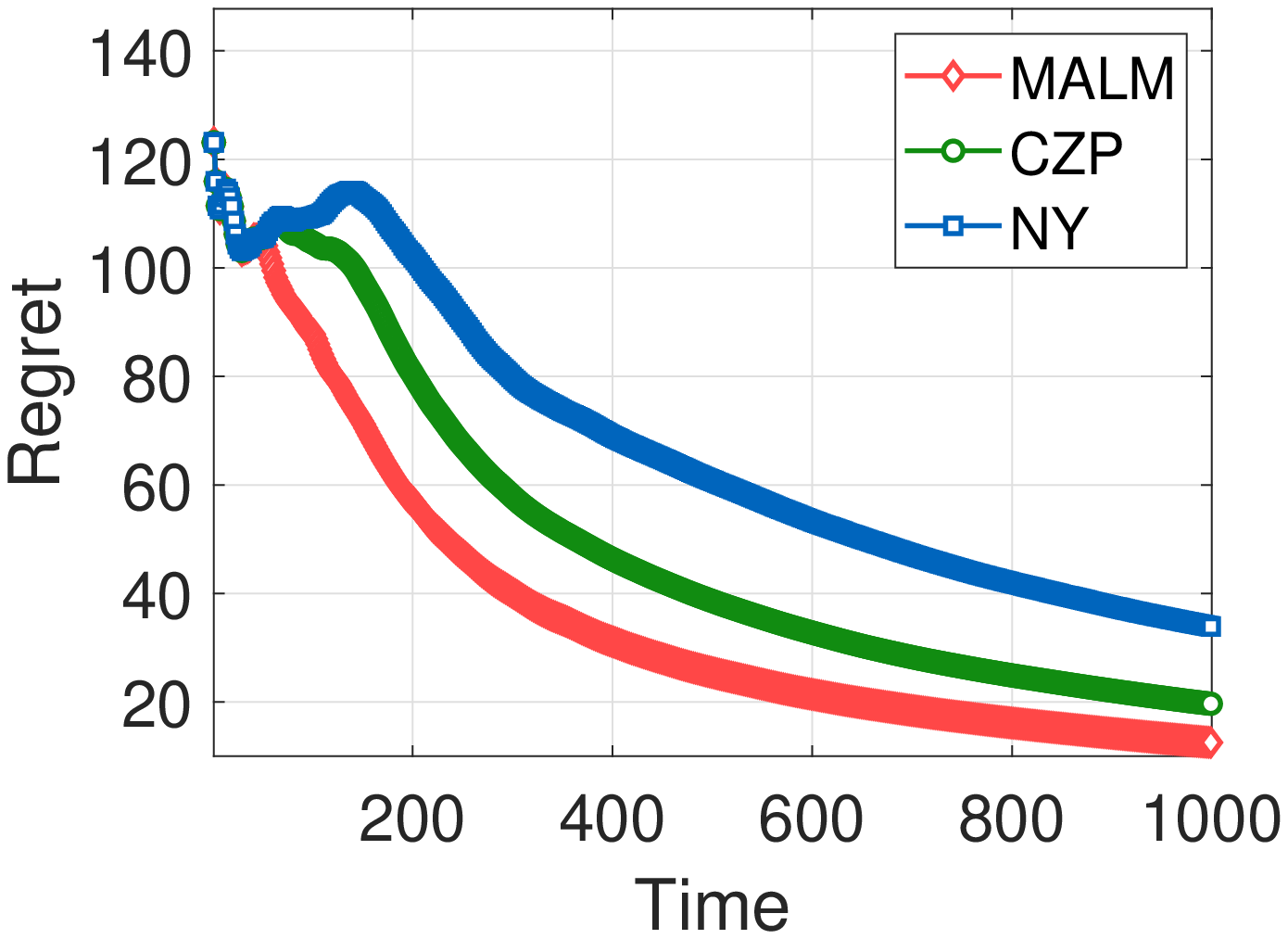}%
\label{oqcqp50-regret}}
\\
\subfloat[]{\includegraphics[width=2.9in]{./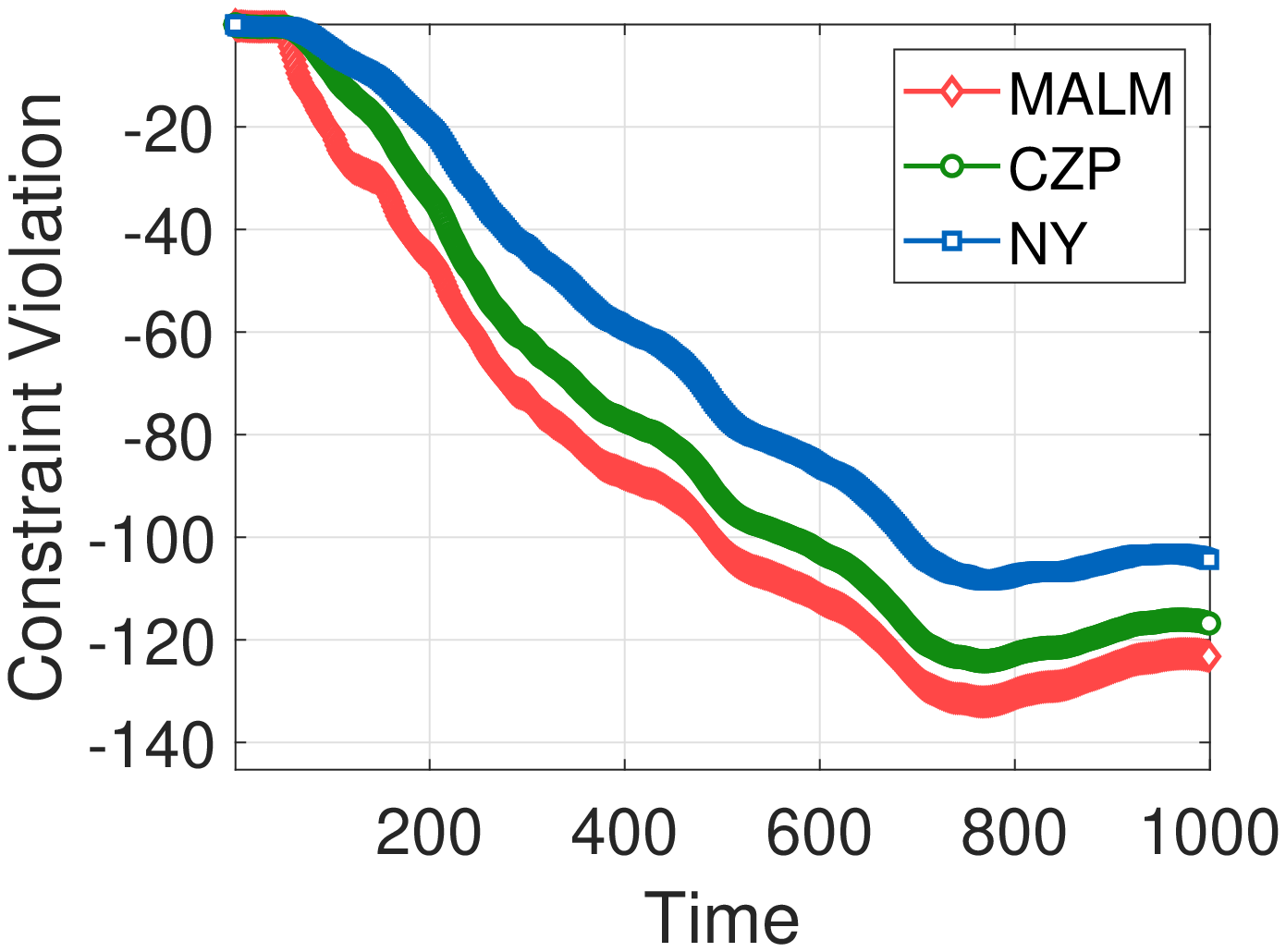}%
\label{oqcqp50-vio}}
\caption{Comparison of algorithms with respect to time average regrets and time average constraint violation for OQCQP with delay $\tau=50$. (a) Time average regrets. (b) Time average constraint violations.}
\label{fig:oqcqp50}
\end{figure}

\begin{figure}[!ht]
\centering
\subfloat[]{\includegraphics[width=2.9in]{./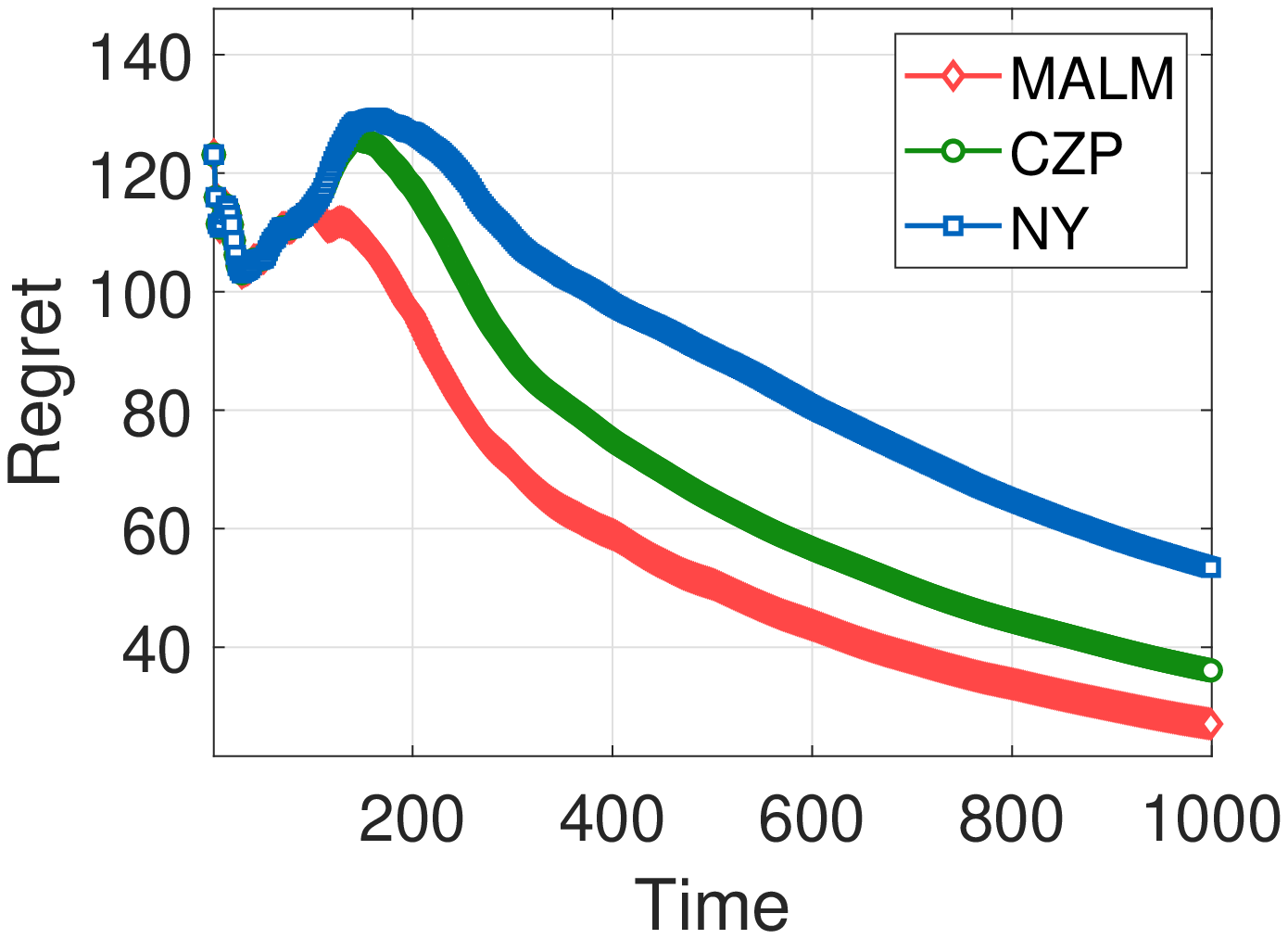}%
\label{oqcqp100-regret}}
\\
\subfloat[]{\includegraphics[width=2.9in]{./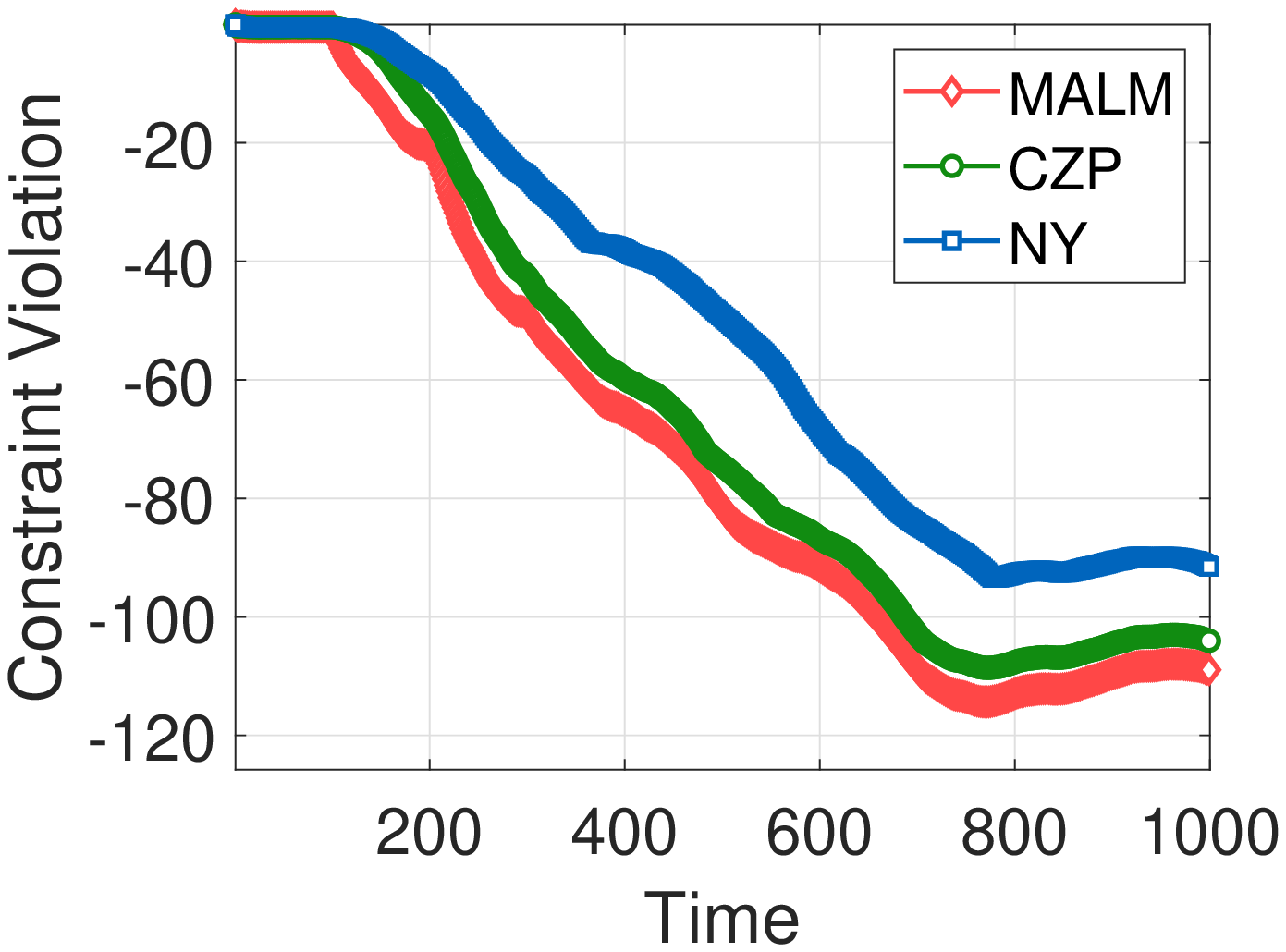}%
\label{oqcqp100-vio}}
\caption{Comparison of algorithms with respect to time average regrets and time average constraint violation for OQCQP with delay $\tau=100$. (a) Time average regrets. (b) Time average constraint violations.}
\label{fig:oqcqp100}
\end{figure}
\section{Conclusion}\label{sec:conclude}
In this paper, a class of model-based augmented Lagrangian methods for time-varying constrained OCO without/with feedback delays have been studied. For both two types of constrained OCO, we have established the sublinear regret and sublinear constraint violation bounds. Various numerical examples have been presented to verify the efficiency of the proposed algorithms.

The proposed algorithms are possible to extend in several interesting research directions. For instance, in some applications of OCO, the (sub)gradients of the loss or/and constraint functions are difficult to obtain. Instead, only function values are available sequentially to the decision maker, which is usually called \textit{bandit feedback} \cite{CLiu2019,CZP2021}. Another interesting line of research is OCO with stochastic functional constraints that are i.i.d. generated at each time slot \cite{YMNeely2017,CZP2021b}.

\appendix  
\section* {An useful lemma}
The following lemma is from \cite[Lemma 6]{ZLX2022}.
\begin{lemma}\label{lem:l7}
Let $\{Z_t\}$ be a sequence  with $Z_0 = 0$. Suppose there exist an integer $t_0 >0$, real constants $\theta>0$, $\delta_{\max}>0$ and $ 0 <\zeta \leq \delta_{\max}$ such that
$\lvert Z_{t+1}-Z_t\rvert  \leq \delta_{\max}$
and
\[
Z_{t+t_0}-Z_t  \leq
-t_0\zeta, \quad \mbox{if } Z_t \geq \theta
\]
hold for all $t \in \{1,2,\ldots\}.$ Then,
\[
Z_t \leq \theta +t_0 \delta_{\max}+t_0  \frac{4 \delta_{\max}^2}{\zeta}\log \left[  \frac{8 \delta_{\max}^2}{\zeta^2} \right], \forall t \in \{1,2,\ldots\}.
\]
\end{lemma}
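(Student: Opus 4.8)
The plan is to prove this as a deterministic drift (Lyapunov) bound, in the spirit of Hajek's drift analysis. The $t_0$-step drift hypothesis is awkward to telescope directly, so the first move is to convert it into a one-step drift by examining the $t_0$ subsequences indexed by the residue $j\in\{0,1,\dots,t_0-1\}$. For fixed $j$, set $W_k:=Z_{j+kt_0}$. Summing the bounded-increment hypothesis over a block of $t_0$ consecutive steps gives $|W_{k+1}-W_k|\le t_0\delta_{\max}=:\delta'$, while the drift hypothesis, applied at $t=j+kt_0$, yields $W_{k+1}-W_k\le -t_0\zeta=:\zeta'$ whenever $W_k\ge\theta$. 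Thus $\{W_k\}$ obeys a clean one-step negative-drift condition with step bound $\delta'$, drift $\zeta'$, and the same threshold $\theta$. Since $Z_0=0<\theta$ and every index $t$ lies in exactly one such subsequence, a uniform bound on all $W_k$ (over all $j$) will give the claim.

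Next I would introduce the exponential Lyapunov function $V_k:=e^{rW_k}$ for a parameter $r>0$ to be tuned, and analyze its one-step change. Writing $\Delta:=W_{k+1}-W_k$, the elementary inequality $e^x\le 1+x+x^2/2$ valid for $x\le 0$ (provable by checking that $1+x+x^2/2-e^x$ is nonnegative on $(-\infty,0]$) is the key estimate. When $W_k\ge\theta$ we have $\Delta\le-\zeta'<0$ and $\Delta^2\le(\delta')^2$, so $e^{r\Delta}\le 1-r\zeta'+\tfrac12 r^2(\delta')^2$; choosing $r$ of order $\zeta'/(\delta')^2$ makes the right-hand side equal to $1-\rho$ for some $\rho\in(0,1)$, where the assumption $\zeta\le\delta_{\max}$ guarantees $\rho<1$. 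This gives the contraction $V_{k+1}\le(1-\rho)V_k$ on the region $W_k\ge\theta$. When instead $W_k<\theta$, the bounded increment gives $W_{k+1}<\theta+\delta'$, hence $V_{k+1}\le e^{r(\theta+\delta')}=:M$.

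The two cases combine into the single recursion $V_{k+1}\le(1-\rho)V_k+M$, valid for every $k$ (in the contraction case the extra $+M$ is harmless, and in the other case $(1-\rho)V_k\ge 0$ absorbs the gap). Telescoping this geometric recursion yields $V_k\le V_0+M/\rho$, and since the initial block values satisfy $|Z_j|\le(t_0-1)\delta_{\max}<\delta'$ we get $V_0\le M$, so $V_k\le M(1+1/\rho)\le 2M/\rho$. Taking logarithms then returns $W_k\le\theta+t_0\delta_{\max}+\tfrac1r\log(2/\rho)$, and substituting the chosen $r\asymp\zeta/(t_0\delta_{\max}^2)$, $\rho\asymp\zeta^2/\delta_{\max}^2$ reproduces the advertised bound $\theta+t_0\delta_{\max}+t_0\tfrac{4\delta_{\max}^2}{\zeta}\log\tfrac{8\delta_{\max}^2}{\zeta^2}$ up to the stated constants. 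The main obstacle is this tuning step: one must pick $r$ so that the quadratic correction in the Taylor bound is dominated by the linear drift term (forcing $\rho$ to stay bounded away from $0$) while keeping $1/r$ small enough that $\tfrac1r\log(2/\rho)$ matches the target — this is where the numerical factors $4$ and $8$ are pinned down and where the hypothesis $\zeta\le\delta_{\max}$ is essential.
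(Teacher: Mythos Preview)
The paper does not prove this lemma; it merely cites it from \cite[Lemma~6]{ZLX2022} and states it in the appendix without argument. So there is no paper proof to compare against.

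Your approach is the standard one for deterministic drift lemmas of this type (the line goes back to Hajek and to virtual-queue stability analyses in Neely's work), and it is sound. Passing to the $t_0$ residue-class subsequences to convert the multi-step drift into a one-step drift is the right reduction; the exponential potential $e^{rW_k}$ with the Taylor estimate $e^x\le 1+x+x^2/2$ on $x\le 0$ gives exactly the contraction/boundedness dichotomy you describe, and the combined geometric recursion $V_{k+1}\le(1-\rho)V_k+M$ telescopes as claimed. With the natural choice $r=\zeta'/(\delta')^2$ one gets $\rho=\zeta^2/(2\delta_{\max}^2)$ and hence the bound $\theta+t_0\delta_{\max}+(t_0\delta_{\max}^2/\zeta)\log(4\delta_{\max}^2/\zeta^2)$, which is in fact slightly sharper than the stated conclusion and therefore implies it. Two minor points worth tightening if you write this out fully: (i) bounding the initial block values $|Z_j|\le(t_0-1)\delta_{\max}$ for $1\le j\le t_0-1$ uses the increment bound at $t=0$, which the lemma as literally written omits---this is a defect in the statement rather than your argument; (ii) when you write ``in the contraction case the extra $+M$ is harmless'' and ``$(1-\rho)V_k\ge 0$ absorbs the gap,'' you are correct, but it is worth stating the merged inequality $V_{k+1}\le(1-\rho)V_k+M$ once and verifying it case-by-case explicitly, since that is the hinge of the whole proof.
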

%




\ifCLASSOPTIONcaptionsoff
  \newpage
\fi



%
\bibliography{ref.bib}{}

\begin{thebibliography}{10}
\providecommand{\url}[1]{#1}
\csname url@samestyle\endcsname
\providecommand{\newblock}{\relax}
\providecommand{\bibinfo}[2]{#2}
\providecommand{\BIBentrySTDinterwordspacing}{\spaceskip=0pt\relax}
\providecommand{\BIBentryALTinterwordstretchfactor}{4}
\providecommand{\BIBentryALTinterwordspacing}{\spaceskip=\fontdimen2\font plus
\BIBentryALTinterwordstretchfactor\fontdimen3\font minus
  \fontdimen4\font\relax}
\providecommand{\BIBforeignlanguage}[2]{{%
\expandafter\ifx\csname l@#1\endcsname\relax
\typeout{** WARNING: IEEEtran.bst: No hyphenation pattern has been}%
\typeout{** loaded for the language `#1'. Using the pattern for}%
\typeout{** the default language instead.}%
\else
\language=\csname l@#1\endcsname
\fi
#2}}
\providecommand{\BIBdecl}{\relax}
\BIBdecl

\bibitem{Shai2011}
S.~Shalev-Shwartz, ``Online learning and online convex optimization,''
  \emph{Foundations and Trends{\textregistered} in Machine Learning}, vol.~4,
  no.~2, pp. 107--194, 2011.

\bibitem{Hazan2015}
E.~Hazan, ``Introduction to online convex optimization,'' \emph{Foundations and
  Trends{\textregistered} in Optimization}, vol.~2, no. 3-4, pp. 157--325,
  2015.

\bibitem{HSLZ2021}
S.~C. Hoi, D.~Sahoo, J.~Lu, and P.~Zhao, ``Online learning: A comprehensive
  survey,'' \emph{Neurocomputing}, vol. 459, pp. 249--289, 2021.

\bibitem{Zi2003}
M.~Zinkevich, ``Online convex programming and generalized infinitesimal
  gradient ascent,'' in \emph{Proceedings of the 20th International Conference
  on Machine Learning}, 2003, pp. 928--935.

\bibitem{HAK2007}
E.~Hazan, A.~Agarwal, and S.~Kale, ``Logarithmic regret algorithms for online
  convex optimization,'' \emph{Mach. Learn.}, vol.~69, pp. 169--192, 2007.

\bibitem{LTS2021}
A.~Lesage-Landry, J.~A. Taylor, and I.~Shames, ``Second-order online nonconvex
  optimization,'' \emph{IEEE Trans. Automat. Control}, vol.~66, no.~10, pp.
  4866--4872, 2021.

\bibitem{MRYang2012}
M.~Mahdavi, R.~Jin, and T.~Yang, ``Trading regret for efficiency: online convex
  optimization with long term constraints,'' \emph{J. Mach. Learn. Res.},
  vol.~13, pp. 2503--2528, 2012.

\bibitem{YN2020}
H.~Yu and M.~J. Neely, ``A low complexity algorithm with {$O(\sqrt T)$} regret
  and {$O(1)$} constraint violations for online convex optimization with long
  term constraints,'' \emph{J. Mach. Learn. Res.}, vol.~21, pp. Paper No. 1,
  24, 2020.

\bibitem{MTY2009}
S.~Mannor, J.~N. Tsitsiklis, and J.~Y. Yu, ``Online learning with sample path
  constraints,'' \emph{J. Mach. Learn. Res.}, vol.~10, pp. 569--590, 2009.

\bibitem{PR2017}
S.~Paternain and A.~Ribeiro, ``Online learning of feasible strategies in
  unknown environments,'' \emph{IEEE Trans. Automat. Control}, vol.~62, no.~6,
  pp. 2807--2822, 2017.

\bibitem{CLG2017}
T.~Chen, Q.~Ling, and G.~B. Giannakis, ``An online convex optimization approach
  to proactive network resource allocation,'' \emph{IEEE Trans. Signal
  Process.}, vol.~65, no.~24, pp. 6350--6364, 2017.

\bibitem{NYu2017}
M.~J. Neely and H.~Yu, ``Online convex optimization with time-varying
  constraints,'' 02 2017, https://arxiv.org/abs/1702.04783.

\bibitem{CLiu2019}
X.~Cao and K.~J.~R. Liu, ``Online convex optimization with time-varying
  constraints and bandit feedback,'' \emph{IEEE Trans. Automat. Control},
  vol.~64, no.~7, pp. 2665--2680, 2019.

\bibitem{ZDH2021}
Y.~Zhang, E.~Dall'Anese, and M.~Hong, ``Online proximal-{ADMM} for time-varying
  constrained convex optimization,'' \emph{IEEE Trans. Signal Inform. Process.
  Netw.}, vol.~7, pp. 144--155, 2021.

\bibitem{FPPR2018}
M.~Fazlyab, S.~Paternain, V.~M. Preciado, and A.~Ribeiro,
  ``Prediction-correction interior-point method for time-varying convex
  optimization,'' \emph{IEEE Trans. Automat. Control}, vol.~63, no.~7, pp.
  1973--1986, 2018.

\bibitem{CB2021a}
X.~Cao and T.~Ba\c{s}ar, ``Decentralized online convex optimization based on
  signs of relative states,'' \emph{Automatica J. IFAC}, vol. 129, pp. Paper
  No. 109\,676, 13, 2021.

\bibitem{Cb2021b}
------, ``Decentralized online convex optimization with event-triggered
  communications,'' \emph{IEEE Trans. Signal Process.}, vol.~69, pp. 284--299,
  2021.

\bibitem{YLYXCJ2021}
X.~Yi, X.~Li, T.~Yang, L.~Xie, T.~Chai, and K.~H. Johansson, ``Distributed
  bandit online convex optimization with time-varying coupled inequality
  constraints,'' \emph{IEEE Trans. Automat. Control}, vol.~66, no.~10, pp.
  4620--4635, 2021.

\bibitem{Rockafellar76B}
R.~T. Rockafellar, ``Augmented {L}agrangians and applications of the proximal
  point algorithm in convex programming,'' \emph{Math. Oper. Res.}, vol.~1,
  no.~2, pp. 97--116, 1976.

\bibitem{ZST2010}
X.-Y. Zhao, D.~Sun, and K.-C. Toh, ``A {N}ewton-{CG} augmented {L}agrangian
  method for semidefinite programming,'' \emph{SIAM J. Optim.}, vol.~20, no.~4,
  pp. 1737--1765, 2010.

\bibitem{CDZ2015}
N.~Chatzipanagiotis, D.~Dentcheva, and M.~M. Zavlanos, ``An augmented
  {L}agrangian method for distributed optimization,'' \emph{Math. Program.},
  vol. 152, no. 1-2, Ser. A, pp. 405--434, 2015.

\bibitem{CZ2016}
N.~Chatzipanagiotis and M.~M. Zavlanos, ``A distributed algorithm for convex
  constrained optimization under noise,'' \emph{IEEE Trans. Automat. Control},
  vol.~61, no.~9, pp. 2496--2511, 2016.

\bibitem{MK2021}
H.~Moradian and S.~S. Kia, ``Cluster-based distributed augmented {L}agrangian
  algorithm for a class of constrained convex optimization problems,''
  \emph{Automatica J. IFAC}, vol. 129, pp. Paper No. 109\,608, 8, 2021.

\bibitem{ZZWX2022}
L.~Zhang, Y.~Zhang, J.~Wu, and X.~Xiao, ``Solving stochastic optimization with
  expectation constraints efficiently by a stochastic augmented lagrangian-type
  algorithm,'' 3 2022, https://arxiv.org/abs/2106.11577.

\bibitem{ZLS2009}
M.~Zinkevich, J.~Langford, and A.~Smola, ``Slow learners are fast,'' in
  \emph{Advances in Neural Information Processing Systems}, vol.~22, 2009.

\bibitem{JGS2013}
P.~Joulani, A.~Gyorgy, and C.~Szepesvari, ``Online learning under delayed
  feedback,'' in \emph{Proceedings of the 30th International Conference on
  Machine Learning}, vol.~28, no.~3.\hskip 1em plus 0.5em minus 0.4em\relax
  PMLR, 2013, pp. 1453--1461.

\bibitem{PSLW2022}
W.~Pan, G.~Shi, Y.~Lin, and A.~Wierman, ``Online optimization with feedback
  delay and nonlinear switching cost,'' \emph{Proc. ACM Meas. Anal. Comput.
  Syst.}, vol.~6, no.~1, pp. 1--34, 2022.

\bibitem{CZP2021}
X.~Cao, J.~Zhang, and H.~V. Poor, ``Constrained online convex optimization with
  feedback delays,'' \emph{IEEE Trans. Automat. Control}, vol.~66, no.~11, pp.
  5049--5064, 2021.

\bibitem{AD2019I}
H.~Asi and J.~C. Duchi, ``Stochastic (approximate) proximal point methods:
  convergence, optimality, and adaptivity,'' \emph{SIAM J. Optim.}, vol.~29,
  no.~3, pp. 2257--2290, 2019.

\bibitem{LDPSM2019}
N.~Liakopoulos, A.~Destounis, G.~Paschos, T.~Spyropoulos, and P.~Mertikopoulos,
  ``Cautious regret minimization: Online optimization with long-term budget
  constraints,'' in \emph{Proceedings of the 36th International Conference on
  Machine Learning}, vol.~97, 09--15 Jun 2019, pp. 3944--3952.

\bibitem{BDS2019}
A.~Bernstein, E.~Dall'Anese, and A.~Simonetto, ``Online primal-dual methods
  with measurement feedback for time-varying convex optimization,'' \emph{IEEE
  Trans. Signal Process.}, vol.~67, no.~8, pp. 1978--1991, 2019.

\bibitem{cvx}
M.~Grant and S.~Boyd, ``{CVX}: Matlab software for disciplined convex
  programming, version 2.1,'' \url{http://cvxr.com/cvx}, Mar. 2014.

\bibitem{YMNeely2017}
H.~Yu, M.~J. Neely, and X.~Wei, ``Online convex optimization with stochastic
  constraints,'' in \emph{Advances in Neural Information Processing Systems},
  2017, pp. 1428--1438.

\bibitem{CZP2021b}
X.~Cao, J.~Zhang, and H.~V. Poor, ``Online stochastic optimization with
  time-varying distributions,'' \emph{IEEE Trans. Automat. Control}, vol.~66,
  no.~4, pp. 1840--1847, 2021.

\bibitem{ZLX2022}
L.~Zhang, H.~Liu, and X.~Xiao, ``Regrets of proximal method of multipliers for
  online non-convex optimization with long term constraints,'' 04 2022,
  https://arxiv.org/abs/2204.10986.

\end{thebibliography}
\bibliographystyle{IEEEtran}

%

%
\begin{IEEEbiographynophoto}{Haoyang Liu}
received the B.S. degree in Applied Mathematics from Dalian University of Technology, the M.S. degree in Commerce Finance from University of New South Wales, in 2011 and 2017, respectively.  He is currently a Ph.D. student in  the School of Mathematical Sciences at Dalian University of Technology, Dalian, China.
His research interest lies in the area of optimization, online learning and financial portfolio.
His current research interest is focused on the development of numerical methods in online learning and online convex optimization.
\end{IEEEbiographynophoto}
%
%
\begin{IEEEbiographynophoto}{Xiantao Xiao}
received the B.S. degree from the Zhengzhou University, Zhengzhou, China, in 2003. He received the Ph.D. degree  in operations research and control theory from Dalian University of Technology, Dalian, China, in 2009.
He has been a professor in School of Mathematical Sciences at Dalian University of Technology since Winter, 2021. He joined the School of Mathematical Sciences at Dalian University of Technology since July 2009. His research interest lies in the development of theory and methods in the context of mathematical optimization, machine learning and stochastic approximation.
\end{IEEEbiographynophoto}

\begin{IEEEbiographynophoto}{Liwei Zhang}
received the B.S. degree, the M.S. degree and Ph.D. degree, all in Applied Mathematics from Dalian University of Technology, in 1989, 1992 and 1998, respectively.
He is now a professor in School of Mathematical Sciences at Dalian University of Technology. His current research interest is focused on stochastic optimization, matrix optimization and online learning.
\end{IEEEbiographynophoto}




\end{document}